\providecommand{\meantmp}[2]{#1({#2}#1)}
\providecommand{\mean}[1]{\meantmp{}{#1}}
\providecommand{\areatmp}[2]{#1\langle{#2}#1\rangle}
\providecommand{\area}[1]{\areatmp{}{#1}}
\numberwithin{equation}{section}
\newcommand{\bv}{\setBV}
\newcommand{\mres}{\mathbin{\vrule height 1.6ex depth 0pt width
0.13ex\vrule height 0.13ex depth 0pt width 1.3ex}}
\newcommand{\dif}{\,\mathrm{d}}
\newcommand{\N}{\setN}
\newcommand{\R}{\setR}
\newcommand{\locc}{\loc}
\newcommand{\ball}{\mathrm{B}}
\newcommand{\sobo}{\mathrm{W}}
\newcommand{\lebe}{\mathrm{L}}
\newcommand{\hold}{\mathrm{C}}
\newcommand{\D}{D} %\mathrm{D}}
\providecommand{\whitneybox}{
%  \draw (0,0) rectangle (1,1);
  \draw[dashed] (-0.1,-0.1) rectangle (1.1,1.1);
  \draw[dotted] (0.1,0.1) rectangle (0.9,0.9);
}
\providecommand{\whitneyboxtwo}{
%  \draw (0,0) rectangle (1,1);
  \fill[pattern=north west lines,even odd rule ] (-0.1,-0.1) rectangle (1.1,1.1)
 (0.1,0.1) rectangle (0.9,0.9);
}
\begin{document}
\title[Lipschitz truncation on $\setBV$]{The Lipschitz truncation of\\
  functions of bounded variation}
\author[D.~Breit]{Dominic Breit}
\author[L.~Diening]{Lars Diening}
\author[F.~Gmeineder]{Franz Gmeineder}
\thanks{The authors are grateful to the Edinburgh Mathematical Society
  for financial support.}

\subjclass[2010]{26B30, 26B35}
\keywords{Functions of bounded variation;
Lipschitz truncation; Lusin property}

\begin{abstract}
  We construct a Lipschitz truncation which approximates functions of
  bounded variation in the area-strict metric. The Lipschitz
  truncation changes the original function only on a small set similar
  to Lusin's theorem.  Previous results could only give estimates
  on the Lebesgue measure of the set where the Lipschitz
  approximations differ from the original function.
\end{abstract}

\maketitle
%\tableofcontents

\section{Introduction}
\label{sec:introduction}

It is a classical fact attributable to \textsc{Lusin} \cite{Lus12}
that any $u\in\lebe^{p}(\Omega)$ with~$\Omega\subset \Rn$ a bounded, open
set, $1\leq p < \infty$, can be approximated by continuous functions
$u_{\lambda}$ such that $u$ is only changed on a small
set, i.e.
\begin{align}
  \norm{u-u_\lambda}_{p} &\to 0 \qquad \text{and} \qquad
  \mathscr{L}^{n}(\{u\neq u_{\lambda}\}) \to 0
\end{align}
as $\lambda \to \infty$.  Here, $\mathscr{L}^n$ is the Lebesgue measure
on~$\Rn$. This was extended by
\textsc{Liu}~\cite{Liu77} to Sobolev functions, showing that for 
any~$u \in \sobo^{1,p}(\Omega)$ one can find $C^1$-appoximations 
$u_\lambda$ such that
\begin{align}
  \norm{u-u_\lambda}_{1,p} &\to 0 \qquad \text{and} \qquad
  \mathscr{L}^{n}(\{u\neq u_{\lambda}\}) \to 0
\end{align}
as~$\lambda \to \infty$. This is called \emph{Lusin property for
  Sobolev functions}.

A qualitative version thereof has been introduced by \textsc{Acerbi \&
  Fusco}: As established in \cite{AceFus84,AceFus88}, for any
$u\in\sobo^{1,p}(\Omega)$ with $1\leq p< \infty$ and all $\lambda>0$
there exist Lipschitz functions $u_{\lambda}$ such that
\begin{align}\label{eq:AcerbiFusco}
  \norm{\nabla u_\lambda}_\infty &\leq c\, \lambda,
  \qquad \text{and} \qquad
  \mathscr{L}^{n}(\{u\neq u_{\lambda}\}) \leq
  \frac{c\, \norm{u}_{1,p}^p}{\lambda^p}
\end{align}
with $c$ independent of~$u$. It is possible to improve the second
bound to
\begin{align}
  \label{eq:evans-sobolev}
  \mathscr{L}^{n}(\{u\neq u_{\lambda}\}) \leq
  \frac{\delta_u(\lambda)\, \norm{u}_{1,p}^p}{\lambda^p}
\end{align}
with $\delta_u(\lambda) \to 0$ (depending on~$u$), cf. \textsc{Evans
  \& Gariepy} \cite[Chapter~6.6.3, Thm.~3]{EvaGar92}.  Again, this
implies $\norm{u-u_\lambda}_{1,p} \to 0$.

Since it is of class $\sobo^{1,\infty}$ and coincides with $u$ apart
from a set of small Lebesgue-measure, $u_{\lambda}$ is usually
referred to as \emph{Lipschitz truncation}. It is a core feature that
$u_{\lambda}$ differs from $u$ only on a small set. This cannot be
achieved by plain mollification.

The Lipschitz truncation has numerous applications in the calculus of
variations~\cite{AceFus87,DieLenStrVer12}, regularity
theory~\cite{Lew93,CarFusMin98,BulDieSch16}, existence of weak
solutions~\cite{FreMS03,DieMS08,BreDieFuc12,SulTsch19,Zhan88} just to name a
few.

For Lipschitz domains it is possible to preserve zero boundary data of
Sobolev functions, see~\cite{Lan96}. It is possible to
obtain additionally stability of the mapping~$u \mapsto u_\lambda$ in all
Lebesgue spaces, see~\cite{BreDieFuc12,DieKreSul13}.

The Lipschitz truncation has been extended partially to functions of
bounded variation $u$ by \textsc{Evans \& Gariepy}
\cite[Chapter.~6.6.2,~Thm.~2]{EvaGar92}, estabilishing the existence of
Lipschitz approximations~$u_\lambda$ such~\eqref{eq:AcerbiFusco} holds.
However, the corresponding substitute for~\eqref{eq:evans-sobolev}
\begin{align}
  \label{eq:evans-bv}
  \mathscr{L}^{n}(\{u\neq u_{\lambda}\}) \leq
  \frac{\delta_u(\lambda)\, \norm{u}_{\setBV(\Omega)}}{\lambda}
\end{align}
cannot be true for~$\setBV$-functions. In fact, this
and~\eqref{eq:AcerbiFusco} would imply $u_\lambda \to u$
in~$\setBV(\Omega)$ and thereby yield the contradictory denseness of
Lipschitz functions in~$\setBV(\Omega)$ for the norm topology; note
that the respective closure of Lipschitz functions is
$W^{1,1}(\Omega)$. In consequence, $\|u-u_{\lambda}\|_{\bv}\to 0$
\emph{cannot hold} for arbitrary $\bv$-functions.

The goal of this paper hence is to extend the Lipschitz truncation
technique to the setting of~$\setBV(\Omega)$ with~$u_\lambda \to u$ in
a useful metric, necessarily weaker than the norm topology. One
possibility is the notion of weak* convergence. However, this notion
is too weak for many aspects.  A more useful concept is the one of
strict convergence, which requires that additionally the total
variation converges, i.e.
$\abs{Du_\lambda}(\Omega) \to \abs{Du}(\Omega)$.

A slight but effective modification of strict convergence is the
area-strict convergence, since it is more flexible in the
applications. This topology is somehow the strongest one, for which
approximation by smooth functions is still possible. Moreover, the
area-strict convergence (in contrast to weak* convergence) ensures
both continuity of convex functionals with linear growth~\cite{Res68}
and continuity of the trace operator, cf.~\cite{EvaGar92}. For these
reasons we aim for area-strict convergence of our Lipschitz
truncation.

The heart of the Lipschitz truncation is the pointwise estimate
\begin{align}
  |u(x)-u(y)|\leq c|x-y|(\mathcal{M}(D u)(x)+\mathcal{M}(D u)(y))\;\;\text{for}\;\mathscr{L}^{n}\text{-a.e.}\;x,y\in\R^{n},
\end{align}
where $\mathcal{M}$ denotes the usual Hardy-Littlewood maximal
operator, being valid for any Sobolev as well
as~ any $\setBV$-function.  As such, $u$ is Lipschitz continuous on the
closed set
$\mathcal{O}_{\lambda}^\complement:=\{\mathcal{M}(\nabla u)\leq\lambda\}$ (the
\emph{good set}) with Lipschitz constant uniformly proportional to
$\lambda$. Using a suitable extension theorem, it is possible to
modify~$u$ on the \emph{bad set}
$\mathcal{O}_{\lambda}:=\{\mathcal{M}(\nabla u)>\lambda\}$ such that
its modification~$u_\lambda$ is Lipschitz continuous. Among all other extensions\footnote{For some steps the
  McShane and the Kirszbraun extension is sufficient, but both fail
  the useful stability estimates, since constant functions are not
  necessarily extended as constant functions.}, the particular extension based on Whitney coverings of $\mathcal{O}_{\lambda}$ has turned out most suitable. 
For this we pick a Whitney
covering $(Q_{j})_{j\in\mathbb{N}}$ of the bad
set~$\mathcal{O}_{\lambda}$ with a corresponding partition of unity
$(\eta_{j})_{j\in\mathbb{N}}$. Let $\mean{u}_{Q_j}$ denote the mean
value of~$u$ over~$Q_j$.  Then the Lipschitz truncation is usually
defined as
\begin{align}\label{eq:defwl'0}
  u_\lambda &:= u- \sum_{j \in \mathbb{N}} \eta_j (u - \mean{u}_{Q_j})
    =\begin{cases}
      u &\qquad \text{on $\mathcal{O}_\lambda^\complement$},
        \\
      \sum_j \eta_j \mean{u}_{Q_j} &\qquad \text{on $\mathcal{O}_\lambda$}.
    \end{cases}
\end{align}
In particular, we replace~$u$ on the bad set locally by its mean
values to obtain higher regularity. To preserve zero boundary values
one has to replace~$\mean{u}_{Q_j}$ close to the boundary by zero.
However, as we will see in Remark~\ref{rem:failure}, this truncation does not give~$u_\lambda \to u$ in the strict sense, as can be seen from
\begin{align*}
u\,:\, (-1,1)^2 \to \setR,\; x
  \mapsto \sgn(x_2-x_1).
\end{align*}
Here the chief issue is that the jump on the diagonal will turn into a zigzag
isolines, which increases the total variation, cf. Figure~\ref{fig:whitney}.

On the other hand, it is well-known that mollification leads to a
area-strict convergence approximation. However, this would change the
function globally, which is undesired in the applications.  Thus, to
overcome the problems with the classical Lipschitz truncation we
propose a modified Lipschitz truncation based on local corrections
using mollification. To be precise, we define
\begin{align}\label{eq:lipdef0}
  u_\lambda &:= T_\lambda u := u - \sum_{j\in\mathbb{N}} \Big(\eta_j (u-\mean{u}_{Q_j}) - \phi_j * (\eta_j
              (u-\mean{u}_{Q_j})) \Big).
\end{align}
Here, for $j\in\mathbb{N}$, $\phi_j$ denotes a suitable mollifier with
regularisation radius being adapted to the cube $Q_{j}$.

The main feature of the operator~$T_\lambda$ is that it posses a nice
(almost) dual operater~$S_\lambda$ with
\begin{align}
  S_\lambda\rho := \rho - \sum_{j}\eta_{j}(\rho-\varphi_{j}*\rho).
\end{align}
The operator~$S_\lambda$  is non-expansive on~$L^\infty$ and satisfies
nice commutator type estimates, see Lemma~\ref{lem:Slambda}, i.e. 
\begin{align*}
  \bigabs{\skp{D T_\lambda u}{\rho} - \skp{Du}{S_\lambda \rho}}
  &\leq c\, h(\lambda) \abs{Du}(\mathcal{O}_\lambda)\, \norm{\rho}_\infty,
\end{align*}
where $h(\lambda) \to 0$ for~$\lambda \to \infty$.

This technique allows us to construct a modified Lipschitz truncation
with $u_\lambda \to u$ in the area-strict sense, simultaneously being able to
preserve zero boundary data. Our main theorem then reads as follows:
\begin{theorem}\label{thm:main}
  Let $\Omega=\Rn$ or let $\Omega\subset\R^{n}$ be an open, bounded
  Lipschitz domain. Let $h \,:\, (0,\infty) \to (0,1]$ be a
  non-increasing function with
  $\lim_{\lambda \to \infty} h(\lambda) = 0$.  Then there exists a
  constant $c=c(n,\Omega)>0$ such that for any $u\in\setBV(\Omega)$
  and $\lambda>0$ there is $u_{\lambda}\colon\Omega\to\R$ with the
  following properties:
  \begin{enumerate}
  \item\label{itm:thm-lip} \emph{(Lipschitz property)} $u_{\lambda}\in
    \sobo^{1,\infty}(\Omega)$ together with $\|\nabla
    u_{\lambda}\|_{\lebe^{\infty}(\Omega)}\leq \frac{c\,\lambda}{h(\lambda)^{n+1}}$. 
  \item\label{itm:thm-small} \emph{(Small change)} We have
    $\set{u_\lambda \neq u}
    \subset\mathcal{O}_{\lambda}:=\{\mathcal{M}(Du)>\lambda\}$ and
    \begin{align*}
      \mathscr{L}^{n}(\mathcal{O}_{\lambda}) \leq c\, \frac{\abs{Du}(\mathcal{O}_\lambda)}{\lambda}.
    \end{align*}
  \item\label{itm:thm-stab} \emph{(Stability)} The mapping $T_{\lambda}\colon u\mapsto u_{\lambda}$ is stable in the sense that for all $1\leq q\leq\frac{n}{n-1}$ there holds
    \begin{align*}
      \|u_{\lambda}\|_{\lebe^{q}(\Omega)}&\leq
      \,c\|u\|_{\lebe^{q}(\Omega)},
      \\
      \|\nabla
      u_{\lambda}\|_{\lebe^{1}(\Omega)} &\leq\,c |\D u|(\Omega). 
    \end{align*}
  \item\label{itm:thm-conv} \emph{(Convergence)} $u_\lambda \to u$ area-strictly
    in $\setBV(\Omega)$ as $\lambda\to\infty$.  More precisely, if
    $\D u = \D^{a}u+\D^{s}u = \nabla u\mathscr{L}^{n} + \D^{s}u$ is
    the Lebesgue-Radon-Nikod\'{y}m decomposition of $\D u$, then
    \begin{align}
      \label{eq:conv}
      \begin{alignedat}{2}
        \indicator_{\mathcal{O}_{\lambda}^\complement}\nabla u_{\lambda}&
        \to \nabla
        u &\qquad&\text{in $L^1(\Omega)$},
        \\
        \nabla u_{\lambda}\mathscr{L}^{n}\mres
        \mathcal{O}_{\lambda}&\to \D^{s}u\mres \Omega
        &&
        \text{area strictly},
      \end{alignedat}
    \end{align}
    as $\lambda\to\infty$.  Moreover,
    \begin{align}
    \label{eq:main-area-strict}
    \area{\D T_{\lambda}u}(\R^{n})
    &\leq \area{Du}(\Rn) + ch(\lambda)|Du|(\mathcal{O}_{\lambda}) +
      \frac{c}{\lambda}\abs{\D u}(\Rn),
      \\
    \label{eq:main-area-strict2}
    \abs{\D T_{\lambda}u}(\R^{n})
    &\leq \abs{Du}(\Rn) + ch(\lambda)|Du|(\mathcal{O}_{\lambda}).
  \end{align}

  \item\label{itm:thm-zero} \emph{(Zero boundary values)} If $u=0$
    on~$\Omega^\complement$, then $u_\lambda = 0$
    on~$\Omega^\complement$ for all $\lambda \geq \lambda_0$, where
    $\lambda_0= \lambda_0(n,\Omega)$.
  \end{enumerate}
\end{theorem}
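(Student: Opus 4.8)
The plan is to realise $T_{\lambda}u$ through a Whitney decomposition of the bad set $\mathcal{O}_{\lambda}=\{\mathcal{M}(\D u)>\lambda\}$, to read off properties \ref{itm:thm-lip}--\ref{itm:thm-stab} and \ref{itm:thm-zero} from classical Whitney estimates, and to establish the convergence statements in \ref{itm:thm-conv} using the commutator estimate of Lemma~\ref{lem:Slambda} as the one genuinely new ingredient. Concretely, I would pick a Whitney covering $(Q_{j})$ of $\mathcal{O}_{\lambda}$ with the standard properties — bounded overlap of the dilates $\tfrac98 Q_{j}$, $\ell(Q_{j})\sim\dista(Q_{j},\mathcal{O}_{\lambda}^{\complement})$, $\tfrac{17}{16}Q_{j}\subset\mathcal{O}_{\lambda}$ — a subordinate partition of unity $(\eta_{j})$ with $\spt\eta_{j}\subset\tfrac98 Q_{j}$, $\sum_{j}\eta_{j}\le 1$ and $\norm{\nabla\eta_{j}}_{\infty}\le c\ell(Q_{j})^{-1}$, and mollifiers $\phi_{j}$ of radius $r_{j}:=\tfrac1{64}h(\lambda)\ell(Q_{j})$, chosen so that each $\phi_{j}*(\eta_{j}(u-\mean{u}_{Q_{j}}))$ is again supported in $\mathcal{O}_{\lambda}$; near $\partial\Omega$ one replaces $\mean{u}_{Q_{j}}$ by $0$ and drops the mollification on cubes meeting a fixed neighbourhood of $\partial\Omega$. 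With this setup $\set{u_{\lambda}\neq u}\subset\mathcal{O}_{\lambda}$ holds by inspection, the measure bound in \ref{itm:thm-small} is the Calder\'on--Zygmund / weak-type estimate $\mathscr{L}^{n}(\mathcal{O}_{\lambda})\le c\lambda^{-1}\abs{\D u}(\mathcal{O}_{\lambda})$, and \ref{itm:thm-zero} follows since, for $\lambda\ge\lambda_{0}(n,\Omega)$ fixed by the Lipschitz geometry of $\Omega$, the construction that survives near $\partial\Omega$ is the classical Whitney truncation with vanishing means, which keeps $u_{\lambda}=0$ on $\Omega^{\complement}$.

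For \ref{itm:thm-lip} and \ref{itm:thm-stab}, on $\mathcal{O}_{\lambda}^{\complement}$ one has $u_{\lambda}=u$, which is $c\lambda$-Lipschitz there by the pointwise maximal-function estimate recalled in the introduction, while on $\mathcal{O}_{\lambda}$ one has $u_{\lambda}=\sum_{j}\eta_{j}\mean{u}_{Q_{j}}+\sum_{j}\phi_{j}*(\eta_{j}(u-\mean{u}_{Q_{j}}))$. The first sum is controlled by $\sum_{j}\nabla\eta_{j}=0$ and the chaining bound $\abs{\mean{u}_{Q_{j}}-\mean{u}_{Q_{k}}}\le c\lambda\ell(Q_{k})$ for neighbouring cubes, giving gradient $\le c\lambda$. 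For the second sum the point is that $\norm{\eta_{j}(u-\mean{u}_{Q_{j}})}_{\lebe^{1}}\le c\ell(Q_{j})\abs{\D u}(cQ_{j})\le c\lambda\ell(Q_{j})^{n+1}$ and $\abs{\D(\eta_{j}(u-\mean{u}_{Q_{j}}))}(\Rn)\le c\abs{\D u}(cQ_{j})$ both hold: differentiating the mollifier and using $\norm{\nabla\phi_{j}}_{\infty}\le cr_{j}^{-n-1}$ yields $\norm{\nabla u_{\lambda}}_{\lebe^{\infty}(\mathcal{O}_{\lambda})}\le c\lambda h(\lambda)^{-n-1}$, which, after the routine gluing across $\partial\mathcal{O}_{\lambda}$, is \ref{itm:thm-lip}; whereas keeping the derivative off the mollifier via $\nabla(\phi_{j}*g)=\phi_{j}*\D g$ for $g\in\setBV$, together with $\sum_{j}\eta_{j}\le 1$ and bounded overlap, gives $\norm{\nabla u_{\lambda}}_{\lebe^{1}}\le c\abs{\D u}(\Omega)$ \emph{without} any power of $h(\lambda)$; and $\abs{\mean{u}_{Q_{j}}}^{q}\le\mean{\abs{u}^{q}}_{Q_{j}}$ with the same overlap argument gives $\norm{u_{\lambda}}_{\lebe^{q}}\le c\norm{u}_{\lebe^{q}}$ on the stated range of $q$, whose upper endpoint is the $\setBV$--Sobolev exponent entering when one corrects near $\partial\Omega$.

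For the convergence \ref{itm:thm-conv}: as $u_{\lambda}=u$ on $\mathcal{O}_{\lambda}^{\complement}$ and $\mathscr{L}^{n}(\{\mathcal{M}(\D u)=\infty\})=0$, dominated convergence gives the first line of \eqref{eq:conv}. Lemma~\ref{lem:Slambda} supplies, for $b\in\lebe^{\infty}$, both $\norm{S_{\lambda}b}_{\infty}\le\norm{b}_{\infty}$ and $\bigabs{\skp{\D T_{\lambda}u}{b}-\skp{\D u}{S_{\lambda}b}}\le ch(\lambda)\abs{\D u}(\mathcal{O}_{\lambda})\norm{b}_{\infty}$. Taking $\sup_{\norm{b}_{\infty}\le 1}$ gives \eqref{eq:main-area-strict2} at once; and for $\rho\in\hold_{c}(\Omega;\Rn)$ one has $S_{\lambda}\rho=\rho$ on $\mathcal{O}_{\lambda}^{\complement}$ while $\abs{S_{\lambda}\rho-\rho}\le\sup_{j}\omega_{\rho}(r_{j})\le\omega_{\rho}(ch(\lambda))\to 0$ on $\mathcal{O}_{\lambda}$ by uniform continuity of $\rho$ (using additionally its compact support if $\Omega=\Rn$), so $\skp{\D T_{\lambda}u}{\rho}\to\skp{\D u}{\rho}$, i.e.\ $\D T_{\lambda}u\wstar\D u$, whence also $\nabla u_{\lambda}\,\mathscr{L}^{n}\mres\mathcal{O}_{\lambda}=\D T_{\lambda}u-\indicator_{\mathcal{O}_{\lambda}^{\complement}}\nabla u\,\mathscr{L}^{n}\wstar\D^{s}u$.

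The heart of the matter is \eqref{eq:main-area-strict}. I would start from the dual representation $\area{\mu}(\Rn)=\sup\{\int_{\Rn}a\dx+\skp{\mu}{b}:(a,b)\in\hold_{c}(\Rn;\R\times\Rn),\ a^{2}+\abs{b}^{2}\le 1\}$, use the commutator estimate with $\mu=\D T_{\lambda}u$ to replace $b$ by $S_{\lambda}b$ at the cost of $ch(\lambda)\abs{\D u}(\mathcal{O}_{\lambda})$, split $\skp{\D u}{S_{\lambda}b}$ into absolutely continuous and singular parts, bound the former via Cauchy--Schwarz by $\int\sqrt{1+\abs{\nabla u}^{2}}\,\sqrt{a^{2}+\abs{S_{\lambda}b}^{2}}\dx$, and then use that $S_{\lambda}b$ is a convex combination of nearby values of $b$ at scale $r_{j}=\tfrac1{64}h(\lambda)\ell(Q_{j})$, so that $(a,S_{\lambda}b)$ violates $a^{2}+\abs{S_{\lambda}b}^{2}\le 1$ only by an amount supported in $\mathcal{O}_{\lambda}$; this excess is then absorbed using $\int_{\mathcal{O}_{\lambda}}\sqrt{1+\abs{\nabla u}^{2}}\dx\le\mathscr{L}^{n}(\mathcal{O}_{\lambda})+\abs{\D^{a}u}(\mathcal{O}_{\lambda})$, $\mathscr{L}^{n}(\mathcal{O}_{\lambda})\le c\lambda^{-1}\abs{\D u}(\Rn)$, and $\norm{S_{\lambda}b}_{\infty}\le 1$ for the singular part. \textbf{The delicate, and I expect hardest, point is to carry out this bookkeeping so that the coefficient of $\abs{\D u}(\mathcal{O}_{\lambda})$ remains $1+o(1)$ rather than some larger constant}, since $\abs{\D u}(\mathcal{O}_{\lambda})$ retains the singular mass and does not tend to $0$ as $\lambda\to\infty$. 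Granting \eqref{eq:main-area-strict}, letting $\lambda\to\infty$ gives $\limsup_{\lambda}\area{\D T_{\lambda}u}(\Rn)\le\area{\D u}(\Rn)$, which with $\D T_{\lambda}u\wstar\D u$ and the weak* lower semicontinuity of $\area{\cdot}$ yields $\area{\D T_{\lambda}u}(\Rn)\to\area{\D u}(\Rn)$, i.e.\ the area-strict convergence $u_{\lambda}\to u$; the second line of \eqref{eq:conv} follows by the same argument applied to $\nabla u_{\lambda}\mathscr{L}^{n}\mres\mathcal{O}_{\lambda}$, using $\area{\nabla u_{\lambda}\mathscr{L}^{n}\mres\mathcal{O}_{\lambda}}(\Omega)=\int_{\mathcal{O}_{\lambda}}\sqrt{1+\abs{\nabla u_{\lambda}}^{2}}\dx+\mathscr{L}^{n}(\Omega\setminus\mathcal{O}_{\lambda})$ and $\mathscr{L}^{n}(\mathcal{O}_{\lambda})\to 0$.
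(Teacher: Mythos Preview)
Your overall architecture matches the paper's: Whitney cover of $\mathcal{O}_\lambda$, the modified truncation with local mollifiers, classical estimates for \ref{itm:thm-lip}--\ref{itm:thm-stab}, and the commutator estimate of Lemma~\ref{lem:Slambda} as the engine for \ref{itm:thm-conv}. Two points deserve comment, one a simplification and one a genuine gap.

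\textbf{The area-strict bound \eqref{eq:main-area-strict}.} The ``delicate bookkeeping'' you flag is real if you only apply $S_\lambda$ to the $b$-component, but the paper sidesteps it entirely by applying $S_\lambda$ to \emph{both} components of the test pair. Observe that $S_\lambda$ is pointwise a convex averaging operator: on $\mathcal{O}_\lambda^\complement$ it is the identity, and on $\mathcal{O}_\lambda$ it is $\sum_j\eta_j(\phi_j*\cdot)$ with $\eta_j\ge 0$, $\sum_j\eta_j=1$, $\phi_j\ge 0$, $\int\phi_j=1$. Hence if $a^2+|b|^2\le 1$ pointwise, then $(S_\lambda a,S_\lambda b)$ also takes values in the closed unit ball, and one obtains directly
\[
\skp{(Du,\mathscr{L}^n)}{(S_\lambda a,S_\lambda b)}\le \area{Du}(\Rn).
\]
The price for replacing $a$ by $S_\lambda a$ against $\mathscr{L}^n$ is simply $\bigabs{\skp{\mathscr{L}^n}{a-S_\lambda a}}\le 2\,\mathscr{L}^n(\mathcal{O}_\lambda)\le c\lambda^{-1}\abs{Du}(\Rn)$, since $a=S_\lambda a$ off $\mathcal{O}_\lambda$ and $\norm{a}_\infty,\norm{S_\lambda a}_\infty\le 1$. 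This is precisely the third term in \eqref{eq:main-area-strict}, and no splitting into absolutely continuous and singular parts, no Cauchy--Schwarz, and no tracking of constraint violations is needed. Your worry about keeping the coefficient of $\abs{Du}(\mathcal{O}_\lambda)$ at $1+o(1)$ evaporates: that term carries only the commutator error $ch(\lambda)$.

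\textbf{Zero boundary values.} Your proposal to ``drop the mollification on cubes meeting a fixed neighbourhood of $\partial\Omega$'' is a genuine problem. On those cubes you revert to the standard Whitney truncation, which (as the paper's Remark~\ref{rem:failure} shows) can inflate the total variation by a fixed factor along a jump set. If $D^s u$ charges any such neighbourhood of $\partial\Omega$, this would destroy \eqref{eq:main-area-strict2} and hence area-strict convergence. The paper's fix is different and preserves the mollification everywhere: on cubes $Q_j$ near $\partial\Omega$ one uses \emph{directed} (non-radially-symmetric) mollifiers, shifted along an approximate outer normal $\nu_j$ so that $\phi_j*(\eta_j u)$ remains supported in $\Omega$ by the Lipschitz cone condition. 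This keeps the commutator structure intact (one only replaces $\phi_j$ by $\overline{\phi_j}(x)=\phi_j(-x)$ in the definition of $S_\lambda$) and delivers \ref{itm:thm-zero} without compromising \ref{itm:thm-conv}.
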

Property~\ref{itm:thm-conv} tells us precisely where the single parts
of the approximations $u_{\lambda}$ converge to: Namely, the
restriction of the gradients $\D u_{\lambda}$ to the good set
$\mathcal{O}_{\lambda}^\complement$ converge to the absolutely
continuous part $\D^{a}u$, whereas the restrictions to the bad
set~$\mathcal{O}_\lambda$ converge to the singular part $\D^{s}u$.

The function~$h$ with
$\lim_{\lambda \to 0}h(\lambda)=0$ as it appears
in~\eqref{eq:main-area-strict} and \eqref{eq:main-area-strict2}
ensures the area strict convergence. However, in return it appears in
the Lipschitz estimate in~\ref{itm:thm-lip} additionally in the
denominator.

The outline of the paper is as follows. In Section~\ref{sec:prelim} we
collect the requisite background facts on functions of bounded
variation and maximal functions of Radon measures.
Then in Section~\ref{sec:main} we present our Lipschitz truncation for
$\setBV$-functions, which concludes in Subsection~\ref{sec:proof-theor-refthm:m} with the
proof of Theorem~\ref{thm:main}.

\section{Preliminaries}
\label{sec:prelim}

Throughout, $\Omega$ denotes an open subset of $\R^{n}$ with
$n\geq 2$. Given $x\in\R^{n}$ and $r>0$, we denote by
$\ball_r(x):=\{y\in\R^{n}\colon |x-y|<r\}$ the open ball of radius $r$
centered at $x$. Cubes $Q\subset\R^{n}$ are always understood to be
non-degenerate and parallel to the axes, and we denote $\ell(Q)$ their sidelength. The
$n$-dimensional Lebesgue measure is denoted $\mathscr{L}^{n}$ and the
$n-1$-dimensional Hausdorff measure is denoted
by~$\mathscr{H}^{n-1}$. Sometimes we use the notation
$|U|:=\mathscr{L}^{n}(U)$ for a measurable set $U\subset\R^{n}$. We
use~$\indicator_U$ for the indicator function of the set~$U$.

\subsection{Radon measures}
\label{sec:gener-notat-radon}

The space of $\setR^m$-valued Radon measures on $\Omega$ with finite
total variation is denoted $\mathscr{M}(\Omega;\setR^m)$, i.e.,
$|\mu|(\Omega)<\infty$.  Given $\mu\in\mathscr{M}(\Omega;\setR^m)$ or
$u\in\lebe_{\locc}^{1}(\Omega;\R^{m})$ and a measurable subset
$U\subset\Omega$ with $|U|\in (0,\infty)$, we use the notation
\begin{align*}
  \mean{\mu}_{U}:=\dashint_{U}\dif\mu:=\frac{\mu(U)}{|U|},\;\;\;\mean{u}_{U}:=\dashint_{U}u\dif
  x := \frac{1}{|U|}\int_{U}u\dif x.  
\end{align*}
The space~$\mathscr{M}(\Omega;\R^m)$ can be identified with the dual
space of~$C_0(\Omega;\R^m)$.  We say that~$\mu_k$ converges weakly*
to~$\mu$ if $\mu_k$ converges weakly* in the sense of~$(C_0(\Omega;\R^m))^*$.

Let $\mu,\mu_k \in\mathscr{M}(\Omega;\R^{m})$. We say that $\mu_k$
converges strictly to~$\mu$ if $\mu_k$ converges weakly* to $\mu$ and
$\abs{\mu_k}(\Omega) \to \abs{\mu}(\Omega)$.

The notions of weak* convergence and strict convergence are too weak
for some applications. Therefore, we introduce in the following the
concepts of area-strict and $f$-strict convergence.

Any $\mu\in\mathscr{M}(\Omega;\setR^m)$ can be decomposed as
$\mu=\mu^{a}+\mu^{s}$, where $\mu^{a}\ll\mathscr{L}^{n}$ and
$\mu^{s}\bot\mathscr{L}^{n}$. We shall refer to this as
\emph{Lebesgue-Radon-Nikod\'{y}m decomposition} of $\mu$.

Let $f\colon\R^{m}\to\R$ be a convex function of linear
growth, i.e., there exist $c_{f},C_{f}>0$ such that
$c_{f}|z|\leq f(z)\leq C_{f}(1+|z|)$ holds for all
$z\in\R^{m}$. We define its \emph{recession function}
$f^{\infty}\colon\R^{m}\to\R$ by
\begin{align*}
  f^{\infty}(z):=\lim_{t\searrow 0}tf\Big(\frac{z}{t}\Big),\qquad z\in\R^{m}. 
\end{align*}
Given a Radon measure~$\mu \in \mathcal{M}(\Omega; \setR^m)$ we define
the Radon measure~$f(\mu)$ by
\begin{align}
  f(\mu) &:=f\Big(\frac{\dif \mu}{\dif\mathscr{L}^n} \Big) \mathscr{L}^n +
            f^{\infty}\Big(\frac{\dif \mu^s}{\dif\abs{\mu^s}} \Big) |\mu^s|.
\end{align}
Let $\mu,\mu_k \in\mathscr{M}(\Omega;\R^{m})$. We say that $\mu_k$
converges $f$-strictly to~$\mu$ if $\mu_k$ converges weakly* to~$\mu$
and
\begin{align*}
  \abs{f(\mu_k)}(\Omega) \xrightarrow{k \to \infty}  \abs{f(\mu)}(\Omega).
\end{align*}
The case $f(z)=\abs{z}$ recovers the strict convergence.

For $f(z)= \sqrt{\smash{\abs{z}}^2+1}$ we abbreviate~$\area{\mu} := f(\mu)$.  We say
$\mu_k$ converges area-strictly to~$\mu$ if
$\area{\mu_k}(\Omega) \to \area{\mu}(\Omega)$.  Note that the
area-strict convergence of~$\mu_k$ to $\mu$ is equivalent to the
strict convergence of~$(\mu_k, \mathscr{L}^n)$ to
$(\mu, \mathscr{L}^n)$.

\subsection{Functions of bounded variation}\label{sec:BV}

We now collect the background definitions and facts on
$\setBV$-functions, all of which can be traced back to
\cite[Chpt.~5]{EvaGar92} and \cite{AmbFusPal00}. Let $\Omega$ be an open subset of
$\R^{n}$. A measurable function $u\colon \Omega\to\R$ is said to
be of \emph{bounded variation} (in which case we write
$u\in\setBV(\Omega)$) if and only if
$u\in\lebe^{1}(\Omega)$ and its total variation
\begin{align*}
  |\D u|(\Omega):=\sup\left\{\int_{\Omega}u\,\divergence(\varphi)\dif x\colon\;\varphi\in\hold_{c}^{1}(\Omega;\R^{n}),\;|\varphi|\leq 1\right\}
\end{align*}
is finite.  The norm on $\setBV(\Omega)$ is given by
$\|u\|_{\setBV(\Omega)}:=\|u\|_{L^1(\Omega)}+|\D u|(\Omega)$. Convergence with
respect to the norm is referred to as \emph{strong convergence}.

The Lebesgue-Radon-Nikod\'{y}m decomposition of $\D u$ into its
absolutely continuous and singular parts for $\mathscr{L}^{n}$ reads
as
\begin{align}\label{eq:LRN}
  \D u = \D^{a}u + \D^{s}u = \nabla u\mathscr{L}^{n} + \frac{\dif\D^{s}u}{\dif|\D^{s}u|}|\D^{s}u|, 
\end{align}
where $\nabla u$ is the approximate gradient.

Given $u,u_k\in\setBV(\Omega)$, we say that
$u_k$ converges weakly* in $\setBV(\Omega)$
provided $u_k\to u$ in $\lebe^{1}(\Omega)$ and
$\D u_k\stackrel{*}{\rightharpoonup}\D u$ in
$\mathscr{M}(\Omega;\R^{n})$.

While weak* convergence is useful for compactness arguments, it is
insufficient for a variety of other applications.  One often needs the
stronger notion of strict or area strict convergence, which we introduce in the
following.

We say that $u_j$ converges stricly (resp. area strictly or $f$-strictly) to
$u$ if $u_j$ converges to~$u$ in $L^1(\Omega)$ and $Du_j$ converges
strictly to ~$Du$ (resp. area strictly or $f$-strictly), see
Subsection~\ref{sec:gener-notat-radon} for the assumptions on~$f$.

Area-strict convergence implies $f$-strict convergence due to
\textsc{Goffman \& Serrin}~\cite{GofSer64} and
\textsc{Reshetnyak}~\cite{Res68}. Therefore it suffices in this
article to restrict ourselves area strict convergence.
Note that~$u_k \to u$ in $L^1(\Omega)$ implies that
\begin{align}
  \label{eq:L1-lsc}
  f(Du)(\Omega) &\leq \smash{\liminf_{k\to \infty}}
                  f(Du_k)(\Omega),
\end{align}
with equality only if $u_k$ converges to~$u$ in the $f$-strict sense.
Area-strict convergence is in some sense the strongest topology still
allowing for smooth approximation, yet being weaker than the norm
topology.  The single convergences are linked as follows:
\begin{align}\label{eq:linkconvs}
  \text{area-strict convergence}\Longrightarrow\text{strict
  convergence}\Longrightarrow\text{weak* convergence}.
\end{align}

\subsection{The Hardy-Littlewood maximal operator for measures}
\label{sec:hardy-littl-maxim}

Let us review the properties of the maximal operator on Radon
measures. For a Radon measure~$\mu$ on~$\Rn$ we define
\begin{align}\label{eq:fractional1}
  \mathcal{M}\mu(x):=\sup_{Q\ni x}\mathcal{M}_{Q}\mu(x):=\sup_{Q\ni
  x}\frac{|\mu|(Q)}{\ell(Q)^{n}},
\end{align}
where the supremum is taken over all cubes.
By the Riesz representation theorem for Radon measures, we may equivalently write 
\begin{align}\label{eq:max}
  (\mathcal{M}\mu)(x)=\sup_{Q\ni x}\sup_{\varphi\in C_0(Q;\R^{m})\setminus\{0\}}\frac{\langle \varphi,\mu\rangle}{\|\varphi\|_{L^{\infty}}\ell(Q)^{n}},\qquad x\in\R^{n}.
\end{align}
For future reference, we collect the most important results of the operator  in 
\begin{lemma}\label{lem:franz}
  The operator $\mathcal{M}$ as defined in \eqref{eq:fractional1} satisfies each of the following:  
  \begin{enumerate}
  \item\label{item:MAX1} For each $\mu\in\mathscr{M}(\R^{n};\R^{m})$, $\mathcal{M}\mu\colon \R^{n}\to \R$ is a lower semicontinuous function. 
  \item\label{item:MAX2} There exists a constant $c=c(n)>0$ such that $\mathscr{L}^{n}(\{\mathcal{M}\mu>\lambda\})\leq \frac{c}{\lambda}|\mu|(\R^{n})$ for all $\mu\in\mathscr{M}(\R^{n};\R^{m})$ and all $\lambda>0$. 
  \item\label{item:MAX3} There exists $c=c(n)>0$ such that for any $v\in\setBV(\R^{n})$ there holds 
    \begin{align*}
      |v(x)-v(y)|& \leq c|x-y|\big(\mathcal{M}\D v(x)+\mathcal{M}\D v(y) \big)
    \end{align*}
    for $\mathscr{L}^{n}$-a.e. $x,y\in\R^{n}$. Here, $v(x)$ and $v(y)$ are understood in the sense of precise representatives.
  \end{enumerate}
\end{lemma}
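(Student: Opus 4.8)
The plan is to establish the three assertions separately, the last being the substantial one.

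For \ref{item:MAX1} I would show that each superlevel set $\{\mathcal{M}\mu>\lambda\}$ is open. If $\mathcal{M}\mu(x)>\lambda$, pick a cube $Q\ni x$ with $|\mu|(Q)>\lambda\,\ell(Q)^{n}$; since this inequality is strict, a slightly dilated cube $Q'\supset Q$ having $x$ in its interior still satisfies $|\mu|(Q')\geq|\mu|(Q)>\lambda\,\ell(Q')^{n}$ once $\ell(Q')$ is close enough to $\ell(Q)$. Every $y$ in the interior of $Q'$ then obeys $\mathcal{M}\mu(y)\geq|\mu|(Q')/\ell(Q')^{n}>\lambda$, so the superlevel set contains a neighbourhood of $x$. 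Assertion \ref{item:MAX2} is the classical weak-type $(1,1)$ bound: by \ref{item:MAX1} the set $E_{\lambda}:=\{\mathcal{M}\mu>\lambda\}$ is open, so it suffices to bound $\mathscr{L}^{n}(K)$ for each compact $K\subset E_{\lambda}$. For $x\in K$ choose a cube $Q_{x}\ni x$ with $|\mu|(Q_{x})>\lambda\,\ell(Q_{x})^{n}$; these cubes cover $K$, and the finite Vitali covering lemma yields a disjoint subfamily $Q_{1},\dots,Q_{N}$ with $K\subset\bigcup_{i}c_{n}Q_{i}$, where $c_{n}Q_{i}$ is the concentric dilate. Then
\begin{align*}
  \mathscr{L}^{n}(K)\leq\sum_{i}c_{n}^{n}\ell(Q_{i})^{n}<\frac{c_{n}^{n}}{\lambda}\sum_{i}|\mu|(Q_{i})\leq\frac{c_{n}^{n}}{\lambda}|\mu|(\R^{n}),
\end{align*}
by disjointness, and taking the supremum over $K$ gives \ref{item:MAX2}.

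For \ref{item:MAX3} the ingredients are the $\setBV$-Poincar\'{e} inequality $\dashint_{\ball_{r}(z)}|v-\mean{v}_{\ball_{r}(z)}|\dx\leq c(n)\,r\,\frac{|\D v|(\ball_{r}(z))}{r^{n}}$ and the elementary comparison $\frac{|\D v|(\ball_{r}(z))}{r^{n}}\leq c(n)\,\mathcal{M}\D v(z)$, obtained by enclosing $\ball_{r}(z)$ in the cube of sidelength $2r$ centred at $z$. I would prove the estimate at every pair $x,y$ of Lebesgue points of $v$, which is $\mathscr{L}^{n}$-a.e.\ point and where $v$ agrees with its precise representative; moreover one may assume $\mathcal{M}\D v(x),\mathcal{M}\D v(y)<\infty$, the bound being trivial otherwise. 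Write $d:=|x-y|$. Telescoping over the dyadic balls $\ball_{2^{-k}d}(x)$, two consecutive averages differ by at most
\begin{align*}
  \bigl|\mean{v}_{\ball_{2^{-k-1}d}(x)}-\mean{v}_{\ball_{2^{-k}d}(x)}\bigr|\leq 2^{n}\dashint_{\ball_{2^{-k}d}(x)}\bigl|v-\mean{v}_{\ball_{2^{-k}d}(x)}\bigr|\dx\leq c(n)\,2^{-k}d\,\mathcal{M}\D v(x),
\end{align*}
so summing the geometric series and using $\mean{v}_{\ball_{2^{-k}d}(x)}\to v(x)$ yields $|v(x)-\mean{v}_{\ball_{d}(x)}|\leq c(n)\,d\,\mathcal{M}\D v(x)$, and likewise for $y$. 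Since $\ball_{d}(x)\cup\ball_{d}(y)\subset\ball_{2d}(x)$ and both balls fill a fixed fraction of $\ball_{2d}(x)$, the same mean-value trick bounds $|\mean{v}_{\ball_{d}(x)}-\mean{v}_{\ball_{2d}(x)}|$ and $|\mean{v}_{\ball_{d}(y)}-\mean{v}_{\ball_{2d}(x)}|$ by $c(n)\,d\,\mathcal{M}\D v(x)$. Combining these four estimates through the triangle inequality produces $|v(x)-v(y)|\leq c(n)\,|x-y|\bigl(\mathcal{M}\D v(x)+\mathcal{M}\D v(y)\bigr)$.

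The routine parts are \ref{item:MAX1} and \ref{item:MAX2}; the only mild subtlety there is that $\mathcal{M}$ is the uncentred maximal operator, which is why the cubes must be slightly enlarged in \ref{item:MAX1}. The point requiring care is \ref{item:MAX3}: one has to justify that the averages over shrinking balls converge to the precise representative at $\mathscr{L}^{n}$-a.e.\ point — the Lebesgue differentiation theorem suffices, since only the $\mathscr{L}^{n}$-a.e.\ statement is claimed — to keep every constant dependent on $n$ alone, and to set up the telescoping together with the bridging step between $x$ and $y$ so that the final bound comes out symmetric in the two points.
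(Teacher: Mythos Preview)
Your proof is correct and follows essentially the same approach as the paper. For \ref{item:MAX1} and \ref{item:MAX2} the paper simply remarks that the arguments are identical to the classical $\lebe^{1}$-case, which is exactly what you spell out; for \ref{item:MAX3} the paper invokes the DeVore--Sharpley inequality $|u(x)-u(y)|\leq c|x-y|\bigl(\sup_{Q\ni x}\ell(Q)^{-1}\dashint_{Q}|u-\mean{u}_{Q}|+\text{same for }y\bigr)$ as a black box and then applies the $\setBV$ Poincar\'{e} inequality, whereas you reproduce that very inequality directly via the dyadic telescoping and bridging argument---so the difference is only whether the chain-of-balls step is cited or carried out by hand.
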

\begin{proof}
  Items~\ref{item:MAX1} and~\ref{item:MAX2} can be established completely analogous as for the well-known case of $\lebe^{1}$-functions. By \cite[Thm.~2.5]{DeVSha84}, for any $u\in\lebe_{\locc}^{1}(\R^{n})$ there holds 
  \begin{align*}
    |u(x)-u(y)|\leq c|x-y|\bigg( \sup_{Q\ni x}\frac{1}{\ell(Q)}\dashint_{Q}|u-\mean{u}_{Q}|\dif z + \sup_{Q\ni y}\frac{1}{\ell(Q)}\dashint_{Q}|u-\mean{u}_{Q}|\dif z \bigg)
  \end{align*}
  for $\mathscr{L}^{n}$-a.e. $x,y\in\R^{n}$. Now it suffices to apply the Poincar\'{e} inequality for $\setBV$-functions and the definition of $\mathcal{M}$ to conclude the claim. The proof is complete. 
\end{proof}

\section{\texorpdfstring{Lipschitz truncation in $\setBV$}{Lipschitz truncation in BV}}
\label{sec:main}

In this section we present our Lipschitz truncation of
$\setBV$--functions. Let $u \in \setBV(\Rn)$ be
given. If we just have $u \in \setBV(\Omega)$ with
$\mathscr{H}^{n-1}(\partial\Omega)<\infty$, then we can extend it by
zero to all of~$\Rn$ using~\cite[Chapter~5.4, Theorem~1]{EvaGar92}. In this
way, we then obtain that $u$ appears as a restriction of some element
of $\setBV(\R^{n})$.

\subsection{Whitney decomposition of the bad set}
\label{sec:whitn-decomp}

For $\lambda>0$ we define the \emph{bad set}
$\mathcal{O}_{\lambda}:=\{\mathcal{M}(\D u) >\lambda\}$. We decompose
this bad set in a standard way by means of a Whitney cover. For this we
use the version~\cite[Lemma~3.1]{{DieRuzWol10}}.  We can
decompose~$\mathcal{O}_\lambda$ into a countable family of open cubes
$\set{Q_{j}}$, each $Q_{j}$ having side length~$r_j>0$, such that the
following holds:
\begin{enumerate}[label={(W\arabic{*})}]
\item\label{W1} $\bigcup_{j}\frac{1}{2}Q_{j}=\mathcal{O}_\lambda$
\item\label{W2} For all $j\in\mathbb{N}$ we have $8Q_{j}\subset\mathcal{O}_\lambda$ and $16Q_{j}\cap(\R^{n}\setminus\mathcal{O}_\lambda)\neq\emptyset$. 
\item\label{W3} If $Q_{j}\cap Q_{k} \neq\emptyset$, then $\frac{1}{2}r_{k}\leq r_{j} \leq 2r_{k}$. 
\item\label{W4} $\frac{1}{4}Q_{j}\cap\frac{1}{4}Q_{k}=\emptyset$ for all $j\neq k$.
\item\label{W5} At every point at most $120^{n}$ of the sets $4Q_{j}$ intersect. 
\end{enumerate}

Subject to the covering $\set{Q_j}$ there exists a partition of
unity $\set{\eta_j} \subset \hold^\infty_c(\Rn)$ with
\begin{enumerate}[label={\rm (P\arabic{*})}, leftmargin=*]
\item \label{itm:P1} $\indicator_{\frac{1}{2}Q_j}\leq \eta_j\leq
  \indicator_{\frac 34 Q_j}$,
\item \label{itm:P3}
  $\norm{\eta_j}_\infty + r_j \norm{\nabla \eta_j}_\infty + r_j^2
  \norm{\nabla^2 \eta_j}_\infty \leq c$.
\end{enumerate}
For each $k \in \setN$ we define $A_k:= \set{ j \,:\, \frac 34
  Q_k \cap \frac 34 Q_j \neq \emptyset}$. Then
\begin{enumerate}[label={\rm (P\arabic{*})}, leftmargin=*,start=3]
\item \label{itm:P4} $\sum_{j \in A_k} \eta_j = 1$ on $\frac 34 Q_k$.
\end{enumerate}
Moreover, we have the following:
\begin{enumerate}[label={\rm (W\arabic{*})}, leftmargin=*, start=6]
\item \label{itm:whit_fat} If $j \in A_k$, then $\abs{Q_j
    \cap Q_k} \geq 16^{-n} \max \set{\abs{Q_j},
    \abs{Q_k}}$.
\item \label{itm:whit_fat34} If $j \in A_k$, then $\abs{\frac 34 Q_j
    \cap \frac 34 Q_k} \geq  \max \set{\abs{Q_j},
    \abs{Q_k}}$.
\item \label{itm:whit_radii} If $j \in A_k$, then $\frac 12 r_k \leq
  r_j < 2 r_k$.
\item \label{itm:whit_sum} $\# A_k \leq 120^n$.
\end{enumerate}
Finally, we need the following geometric alternatives in the
spirit of~\cite[Lemma~3.2]{DieSchStrVer17}.
\begin{lemma}
  \label{lem:alternatives}
  Let $Q$ be an open cube of side length~$r$. Then  one of the following
  alternatives holds:
  \begin{enumerate}[label={\emph{(A\arabic{*})}}]
  \item \label{itm:alt_small} There exists $k \in \setN$ such that $Q \cap
    \frac 12 Q_k \neq \emptyset$, $8r \leq r_k$ and $Q \subset \frac
    34 Q_k$.
  \item \label{itm:alt_large} For all $j \in\setN$ with $Q \cap \frac
    34 Q_j \neq \emptyset$ there holds $r_j \leq 16 r$ and $\abs{Q_j}
    \leq 8^n \abs{Q_j \cap Q}$. Moreover, $137 Q \cap
    \mathcal{O}_\lambda^\complement \neq \emptyset$.
  \end{enumerate}
\end{lemma}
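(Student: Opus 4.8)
The plan is to set up a dichotomy according to whether the cube $Q$ meets some Whitney cube $\frac12 Q_k$ that is much larger than $Q$, say with $8r \leq r_k$. If such a $k$ exists, I claim \ref{itm:alt_small} holds. Indeed, if $Q \cap \frac12 Q_k \neq \emptyset$ and $8r \leq r_k$, then every point of $Q$ is within distance $\operatorname{diam}(Q) = \sqrt n\, r$ of $\frac12 Q_k$; since $\frac12 Q_k$ and $\frac34 Q_k$ are concentric with the gap between their boundaries being $\frac18 r_k \geq r$, a direct estimate shows $Q \subset \frac34 Q_k$ (here one uses $\sqrt n\, r \le r_k/8$ only after noting the sup-norm distance from $\frac12 Q_k$ to $\partial(\frac34 Q_k)$ is $\frac18 r_k$, and $\|x-y\|_\infty \le r$ for $x,y\in Q$; so $8r\le r_k$ suffices). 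This gives the first alternative.

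Now suppose no such $k$ exists, i.e.\ for every $j$ with $Q \cap \frac34 Q_j \neq\emptyset$ we do \emph{not} have $8r \le r_j$ together with $Q\cap\frac12 Q_j\neq\emptyset$; I want to deduce \ref{itm:alt_large}. The first task is the bound $r_j \le 16 r$ whenever $Q \cap \frac34 Q_j \neq \emptyset$. Suppose for contradiction that $r_j > 16 r$ for some such $j$. Then $Q$ is small compared to $Q_j$, and since $Q$ meets $\frac34 Q_j$, a geometric argument (again comparing $\frac12 Q_j$ and $\frac34 Q_j$, whose boundaries are separated by $\frac18 r_j > 2r \ge \operatorname{diam}_\infty(Q)$) forces $Q$ to meet $\frac12 Q_j$ as well — or more carefully, one shows $Q$ lies in the annular region $\frac34 Q_j \setminus \frac12 Q_j$ or enters $\frac12 Q_j$; the latter is excluded by hypothesis, and the former is impossible because the annulus has width $\frac18 r_j$ in sup-norm which exceeds $\operatorname{diam}_\infty Q$, yet $Q$ straddling the boundary of $\frac34 Q_j$ is also excluded since $Q\cap\frac34 Q_j\ne\emptyset$ was assumed with $Q$ possibly sticking out — this needs the observation that if $Q \not\subset \frac34 Q_j$ and $r_j > 16r$ then $Q$ meets $\partial(\frac34 Q_j)$, and walking inward by $\operatorname{diam}_\infty Q < \frac18 r_j$ still lands inside $\frac34 Q_j$, so $Q \cap \frac12 Q_j = \emptyset$ is consistent only if $Q$ is trapped in the annulus, contradiction. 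I expect this case analysis, done cleanly with sup-norm distances, to be the main obstacle — it is elementary but fiddly, and the constants ($8$, $16$, $137$) must be tracked. Once $r_j \le 16r$ is in hand, the volume bound $\abs{Q_j} \le 8^n \abs{Q_j \cap Q}$ follows from a Whitney-covering estimate: since $Q$ meets $\frac34 Q_j$ and $r_j \le 16 r$, the cube $Q$ contains a fixed fraction of $Q_j$; comparing side lengths $r_j \le 16 r$ with $\ell(Q_j \cap Q) \ge c(n) r_j$ gives the claim (one may need to replace $8^n$ by the sharp constant coming from $r_j/r \le 16$, but the stated form is what is used downstream).

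Finally, for the last assertion of \ref{itm:alt_large}, namely $137 Q \cap \mathcal{O}_\lambda^\complement \neq \emptyset$: since we are in the second alternative, $Q$ cannot be swallowed by a large Whitney cube, so intuitively $Q$ is comparable in scale to the local Whitney cubes, which by \ref{W2} have $16 Q_j \cap \mathcal{O}_\lambda^\complement \neq \emptyset$. Concretely, either $Q \cap \mathcal{O}_\lambda = \emptyset$, in which case $Q$ itself meets $\mathcal{O}_\lambda^\complement$ and we are done trivially (with constant $1$); or $Q$ meets $\mathcal{O}_\lambda$, hence meets $\frac12 Q_j$ for some $j$ by \ref{W1}, so in particular $Q\cap\frac34 Q_j\ne\emptyset$ and the already-proved bound gives $r_j \le 16 r$. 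By \ref{W2} there is a point $z \in 16 Q_j \cap \mathcal{O}_\lambda^\complement$. Estimating: $z$ is within sup-norm distance $\frac{16}{2} r_j = 8 r_j \le 128 r$ of the center of $Q_j$, which in turn is within $\frac12 r_j + \operatorname{diam}_\infty Q \le 8r + r = 9r$ of (a point of) $Q$; wait — more carefully, a point of $Q$ lies in $\frac34 Q_j$, hence within $\frac38 r_j \le 6r$ of the center of $Q_j$, so $z$ is within $8r_j + 6r \le 128 r + 6r = 134 r$ of that point of $Q$, whence $z \in 137 Q$ once one accounts for the half-side-length conventions (the slack between $134$ and $137$ absorbs the $\frac12 r$ from "center of $137Q$" versus "point of $Q$"). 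Tightening these inequalities with the sup-norm and the precise dilation conventions yields the constant $137$. This part is routine once the scale comparison $r_j \le 16 r$ from the previous step is available; I do not anticipate difficulty here beyond bookkeeping of the dilation factors.
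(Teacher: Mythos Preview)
Your dichotomy setup and the verification of \ref{itm:alt_small} are fine and match the paper. The real gap is in your argument for $r_j \leq 16r$ under the negation of \ref{itm:alt_small}. You try to derive a contradiction purely from the three facts $Q \cap \tfrac34 Q_j \neq \emptyset$, $Q \cap \tfrac12 Q_j = \emptyset$, and $r_j > 16r$, using only the geometry of $Q$ and $Q_j$. But these three facts are \emph{not} contradictory: a small cube $Q$ can sit comfortably inside the annulus $\tfrac34 Q_j \setminus \tfrac12 Q_j$, whose sup-norm width $\tfrac18 r_j$ \emph{exceeds} the side length of $Q$. You actually state this inequality yourself and then call the situation ``impossible''---it is not. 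No argument involving only $Q$ and the single cube $Q_j$ can succeed here.

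What is missing is the Whitney structure, specifically \ref{W1} and \ref{W3}. The paper picks a point $x \in Q \cap \tfrac34 Q_j$, invokes \ref{W1} to find an index $m$ with $x \in \tfrac12 Q_m$, applies the negation of \ref{itm:alt_small} to this $m$ (since $x \in Q \cap \tfrac12 Q_m$) to obtain $r_m \leq 8r$, and then uses \ref{W3} on the overlapping pair $Q_m, Q_j$ (they both contain $x$) to conclude $r_j \leq 2r_m \leq 16r$. This intermediate Whitney cube $Q_m$ is the idea you are lacking. Once it is in hand, the $137Q$ assertion also falls out directly from $r_m \leq 8r$ via $16 Q_m \subset (1+17\cdot 8)Q = 137Q$ together with \ref{W2}, with no further case analysis or constant-chasing needed.
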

\begin{proof}
  If there exists $k \in \setN$ such that $Q \cap \frac 12 Q_k \neq
  \emptyset$ and $8r \leq r_k$, then automatically $Q \subset Q_k$.
  Assume now that such a $k$ does not exist. Then for every $l \in
  \setN$ with $Q \cap \frac 12 Q_l \neq \emptyset$, there holds $r_l
  \leq 8r$. Suppose that $Q \cap \frac 34 Q_j \neq \emptyset$. Now let
  $x \in Q \cap \frac 34 Q_j$, then by~\ref{W1} there exists
  $m$ such that $x \in \frac 12 Q_m$. In particular, we have $Q \cap
  \frac 34 Q_j \neq \emptyset$ due to~\ref{W2} and $\frac 12 Q_m \cap \frac 34 Q_j
  \neq \emptyset$, since both sets contain~$x$. Now, our assumption
  and $Q \cap \frac 34 Q_j \neq \emptyset$ imply $r_m \leq 8r$. On
  the other hand, $\frac 12 Q_m \cap \frac 34 Q_j \neq \emptyset$
  and~\ref{W3} imply $r_j \leq 2 r_m$. Thus, $r_j \leq 16 r$.
  Moreover, it follows from $8r \geq r_m$ that $137 Q = (1+17\cdot 8)Q
  \supset 16 Q_m$.  Since $16 Q_m \cap (\setR^{n} \setminus \mathcal{O}_\lambda)
  \neq \emptyset$, we also get $137 Q \cap (\setR^{n} \setminus
  \mathcal{O}_\lambda) \neq \emptyset$. %Now, let $z_0 \in Q \cap \frac 34 Q_j$.
  It remains to prove $\abs{Q_j} \leq 8^{n} \abs{Q_j \cap Q}$. If $r
  \leq \frac 18 r_j$, then $Q \subset Q_j$ and the claim follows. If
  $r \geq \frac 18 r_j$, then there exists an open cube $Q'$ with side
  length $\frac 18 r_j$ such that $Q' \subset Q_j
  \cap Q$.  So in this case $\abs{Q_j \cap Q} \geq \abs{Q'} \geq
  8^{-n} \abs{Q_j}$. The proof is complete. 
\end{proof}

For each~$j \in \setN$ define
\begin{align}
  \label{eq:defvj}
  u_{j}&:=
         \begin{cases} \mean{u}_{Q_j} = \dashint_{Q_j} u \dif x
           &\;\text{if}\;\frac 34 Q_j\subset\Omega
           \\
           0&\;\text{otherwise}. 
         \end{cases}
\end{align}
Note that the~$u_j$ depend implicitly on~$\lambda$. However, for the sake of
readability we avoid an extra index.

Similar to \cite[Lem.~23]{DieKreSul13} we obtain the following
estimates for~$u$ on the Whitney cubes.
\begin{lemma}\label{lem:1}
  There exists a constant $c=c(n,\Omega)>0$ such that for all
  $\lambda>0$ and all $j\in\N$ the following holds:
  \begin{enumerate}
  \item\label{item:aux1} We have
    \begin{align*}
      \dashint_{Q_j}\left\vert \frac{u-u_{j}}{r_{j}}\right\vert\dif x
      \leq c\,\frac{\abs{Du}(Q_j)}{\abs{Q_j}} \leq c\, \lambda.
    \end{align*}
  \item\label{item:aux2} If $k\in\N$ is such that $\frac 34 Q_{j}\cap \frac 34 Q_{k}\neq\emptyset$, then 
    \begin{align*}
      |u_k-u_j|\leq \,c\,\dashint_{Q_j}|u-u_{j}|\dif x + c\,\dashint_{Q_k}|u-u_{k}|\dif x. 
    \end{align*}
  \item\label{item:aux3} If $k\in\N$ is such that
    $\frac 34 Q_{j}\cap \frac 34 Q_{k}\neq\emptyset$, then
    $|u_{j}-u_{k}|\leq cr_{j}\lambda$.
  \end{enumerate}
\end{lemma}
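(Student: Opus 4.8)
The plan is to prove the three estimates in sequence, each building on the previous one; the only genuine work is in item~\eqref{item:aux1}, after which items~\eqref{item:aux2} and~\eqref{item:aux3} follow by soft arguments.

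For item~\eqref{item:aux1}, I would distinguish the two cases in the definition~\eqref{eq:defvj} of $u_j$. If $\frac 34 Q_j \subset \Omega$, then $u_j = \mean{u}_{Q_j}$ and the first inequality is exactly the $\setBV$ Poincar\'e inequality on the cube $Q_j$: namely $\dashint_{Q_j} \abs{u - \mean{u}_{Q_j}}\dif x \leq c\, r_j\, \abs{Du}(Q_j)/\abs{Q_j}$, which after dividing by $r_j$ gives the claim. If instead $\frac 34 Q_j \not\subset \Omega$, then $u_j = 0$, and because $u$ has been extended by zero outside $\Omega$ we have $u \equiv 0$ on $Q_j \setminus \Omega$, so $u_j = 0$ is still comparable to a mean value of $u$ over a set of measure comparable to $\abs{Q_j}$; more directly, one applies the Poincar\'e-type inequality with zero boundary data (Sobolev--Poincar\'e for $\setBV$ on a cube meeting the complement of a nice set), using~\ref{W2} together with the fact that $16 Q_j$ meets $\mathcal O_\lambda^\complement$ and the domain is Lipschitz. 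The second inequality in~\eqref{item:aux1}, namely $\abs{Du}(Q_j)/\abs{Q_j} \leq c\lambda$, comes straight from~\ref{W2}: since $8 Q_j \subset \mathcal O_\lambda$ but $16 Q_j \cap \mathcal O_\lambda^\complement \neq \emptyset$, pick a point $z \in 16 Q_j$ with $\mathcal M(Du)(z) \leq \lambda$; the cube $16 Q_j$ contains $z$ and contains $Q_j$, so $\abs{Du}(Q_j) \leq \abs{Du}(16 Q_j) \leq \mathcal M_{16Q_j}(Du)(z)\,(16 r_j)^n \leq 16^n \lambda\, \abs{Q_j}$, using~\eqref{eq:fractional1}.

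For item~\eqref{item:aux2}, suppose $\frac 34 Q_j \cap \frac 34 Q_k \neq \emptyset$. By~\ref{itm:whit_fat34} (or~\ref{itm:whit_fat}) the intersection $\frac 34 Q_j \cap \frac 34 Q_k$ has measure comparable to both $\abs{Q_j}$ and $\abs{Q_k}$. I would write $u_j - u_k = (u_j - u(x)) + (u(x) - u_k)$ and average over $x$ in the common piece $\frac 34 Q_j \cap \frac 34 Q_k$: this bounds $\abs{u_j - u_k}$ by $\dashint_{\frac 34 Q_j \cap \frac 34 Q_k} \abs{u - u_j}\dif x + \dashint_{\frac 34 Q_j \cap \frac 34 Q_k}\abs{u - u_k}\dif x$, and since the averaging domain has measure comparable to $\abs{Q_j}$ and to $\abs{Q_k}$ respectively, each term is controlled by $c\dashint_{Q_j}\abs{u - u_j}\dif x$ and $c\dashint_{Q_k}\abs{u - u_k}\dif x$. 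A small caveat: if one (or both) of $u_j$, $u_k$ is the zero replacement rather than an actual mean value, the identity $\dashint_{Q_j}(u - u_j) = 0$ no longer holds, so one should instead note that in that case $u_j = 0$ is still close to $\mean{u}_{Q_j}$ because $\abs{\mean{u}_{Q_j}} \leq \dashint_{Q_j}\abs{u - u_j}\dif x$ trivially when $u_j=0$, i.e. $\dashint_{Q_j}\abs{u - \mean{u}_{Q_j}}\dif x \le 2\dashint_{Q_j}\abs{u-u_j}\dif x$; the triangle inequality then absorbs everything into the stated right-hand side.

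Finally, item~\eqref{item:aux3} is just the composition of the previous two: given $\frac 34 Q_j \cap \frac 34 Q_k \neq \emptyset$, apply~\eqref{item:aux2} and then bound each of the two averages by item~\eqref{item:aux1}, getting $\abs{u_j - u_k} \leq c r_j \lambda + c r_k \lambda$; since~\ref{W3} (equivalently~\ref{itm:whit_radii}) gives $r_k \leq 2 r_j$, this is $\leq c r_j \lambda$. The main obstacle is the bookkeeping in item~\eqref{item:aux1} around the boundary case $u_j = 0$: one must invoke the right version of the $\setBV$ Poincar\'e inequality (with the Lipschitz domain $\Omega$ entering the constant, which is why $c$ depends on $\Omega$), and keep track that~\ref{W2} still furnishes a point of $\mathcal O_\lambda^\complement$ in a fixed dilate of $Q_j$ so that the maximal-function bound $\abs{Du}(Q_j) \leq c\lambda\abs{Q_j}$ survives in both cases. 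Everything else is elementary averaging over overlapping Whitney cubes.
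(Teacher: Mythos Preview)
Your proposal is correct and follows essentially the same route as the paper: the two-case Poincar\'e argument for~\ref{item:aux1} (with the Lipschitz hypothesis on $\Omega$ ensuring $\abs{Q_j\cap\Omega^\complement}\gtrsim\abs{Q_j}$ in the boundary case, and~\ref{W2} supplying a point of $\mathcal O_\lambda^\complement$ for the maximal-function bound), the triangle-inequality-and-average over the overlap for~\ref{item:aux2}, and the combination via~\ref{itm:whit_radii} for~\ref{item:aux3}. Your caveat in~\ref{item:aux2} about the zero case is in fact unnecessary, since the averaging argument never uses that $u_j$ is a mean value---it only uses the triangle inequality and the measure comparability of the overlap.
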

\begin{proof}
  Ad~\ref{item:aux1}. By definition of the $u_{j}$'s, cf.~\eqref{eq:defvj},
  either $\frac{3}{4}Q_{j}\subset\Omega$, in which case we have
  \begin{align*}
    \dashint_{Q_{j}}\left\vert \frac{u-u_{j}}{r_{j}}\right\vert\dif x \leq
    c\,\frac{\abs{Du}(Q_j)}{\abs{Q_j}}
  \end{align*}
  by Poincar\'{e}'s inequality. If $\frac 34 Q_{j}\nsubseteq\Omega$ we deduce
  that $|Q_{j}\cap \Omega^\complement|\geq c|Q_j|$ since
  $\Omega$ has Lipschitz boundary. Therefore, by the variant
  of Poincar\'{e}'s inequality given in \cite[Prop.~5.4.1]{EvaGar92},
  \begin{align*}
    \dashint_{Q_j}\left\vert \frac{u-u_{j}}{r_{j}}\right\vert\dif x
    = \dashint_{Q_j}\left\vert \frac{u}{r_{j}}\right\vert\dif x
    \leq 
    c\,\frac{\abs{Du}(Q_j)}{\abs{Q_j}}. 
  \end{align*}
  By \ref{W2}, we have
  $16Q_{j}\cap \mathcal{O}_{\lambda}^\complement\neq\emptyset$ and thus find $z\in \mathcal{O}_{\lambda}^\complement$ as well as $r\leq 32 r_{j}$ such that
  $16Q_{j}\subset \ball_r(z)$. Therefore,
  \begin{align*}
    \frac{\abs{Du}(Q_j)}{\abs{Q_j}} \leq c\,
    \frac{\abs{Du}(16Q_j)}{\abs{16 Q_j}}\leq \,c\,
    \frac{\abs{Du}(\ball_r(z))}{\abs{\ball_r(z)}} \leq c\,
    (\mathcal{M}|Du|)(z) \leq c\lambda.
  \end{align*}
  Ad~\ref{item:aux2}. Under the assumptions of \ref{item:aux2}, we deduce from \ref{itm:whit_fat34} that $c\max\{|\frac 34Q_j|,|\frac 34 Q_k|\}\leq |Q_j\cap Q_k|$. Thus, 
  \begin{align*}
    |u_{j}-u_{k}| & \leq \dashint_{Q_j\cap
                    Q_k}|u-u_j|\dif x +
                    \dashint_{Q_j\cap Q_k}|u-u_{k}|\dif
                    x 
    \\ &\leq c\left( \dashint_{Q_j}|u-u_{j}|\dif x + \dashint_{Q_k}|u-u_{k}|\dif x \right)
  \end{align*}
  which implies the claim. Ad~\ref{item:aux3}. By \ref{itm:whit_radii}, this is an immediate consequence of \ref{item:aux1} and \ref{item:aux2}. The proof is complete.  
\end{proof}

\subsection{Definition of the Lipschitz truncation}
\label{sec:defin-our-lipsch}

In this subsection we introduce a modified Lipschitz truncation. Toward Theorem~\ref{thm:main}, we begin by showing that the standard Lipschitz truncation
for~$W^{1,p}$-functions cannot be employed as it does not yield strict convergence in~$\setBV$. This is the content of the following remark.
\begin{remark}[Failure of the standard Lipschitz truncation]
  \label{rem:failure}
  Let us explain why the standard Lipschitz truncation cannot yield
  strict convergence.  Consider the the function
  $u\,:\, \Omega \to \setR$ with $\Omega = (-1,1)^2$ and
  \begin{align*}
    u(x) &:= \sgn(x_2-x_1).
  \end{align*}
  Then $u \in \setBV(\Omega)$ and $\abs{Du}(\Omega) =
  2\,\sqrt{2}$. Let
  $\mathcal{O}_\lambda= \set{\mathcal{M}(Du)>\lambda}$ denote the bad
  set. Then for large~$\lambda$, the set~$\mathcal{O}_\lambda$ and its
  Whitney decomposition looks roughly as in Figure~\ref{fig:whitney}. The standard Lipschitz truncation is defined as
  \begin{align*}
    u_\lambda &:= u  + \sum_j \eta_j (\mean{u}_{Q_j}-u)
    =\begin{cases}
      u &\qquad \text{on $\mathcal{O}_\lambda^\complement$},
        \\
      \sum_j \eta_j \mean{u}_{Q_j} &\qquad \text{on $\mathcal{O}_\lambda$}.
    \end{cases}
  \end{align*}
  The dyadic structure of the Whitney cubes forces the isolines
  of~$u_\lambda$ (for large~$\lambda$) to form a zigzag pattern, thereby increasing the length of the isolines. Hence, the co-area formula
  \begin{align*}
    \abs{Du}(\Omega) &= \int_{-\infty}^\infty \mathscr{H}^{n-1}(u^{-1}(\set{t}))\,dt
  \end{align*}
  shows that $\abs{Du_\lambda}(\Omega) \geq (1+\delta)
  \abs{Du}(\Omega)$ for some fixed~$\delta>0$.
  Thus, $u_\lambda$ cannot strictly converge to~$u$ in~$\setBV(\Omega)$.
  \begin{figure}[ht!]
  \begin{tikzpicture}[scale=0.8]
    \begin{scope}[shift={(-2,-2)}]
      \whitneyboxtwo
      \node at (0.5,0.5) {\scalebox{0.8}{$0$}};
    \end{scope}
    \begin{scope}[shift={(-1,-1)}]
      \whitneyboxtwo
      \node at (0.5,0.5) {\scalebox{0.8}{$0$}};
    \end{scope}
    \begin{scope}
      \whitneyboxtwo
      \node at (0.5,0.5) {\scalebox{0.8}{$0$}};
    \end{scope}
    \begin{scope}[shift={(1,1)}]
      \whitneyboxtwo
      \node at (0.5,0.5) {\scalebox{0.8}{$0$}};
    \end{scope}
    \begin{scope}[shift={(2,2)},scale=1]
      \whitneyboxtwo
      \node at (0.5,0.5) {\scalebox{0.8}{$0$}};
    \end{scope}
    \begin{scope}[shift={(3,3)},scale=1]
      \whitneyboxtwo
      \node at (0.5,0.5) {\scalebox{0.8}{$0$}};
    \end{scope}

    \begin{scope}[shift={(-2,-1)},scale=1]
      \whitneybox
      \node at (0.5,0.5) {\scalebox{0.8}{$1$}};
    \end{scope}
    \begin{scope}[shift={(-1,0)},scale=1]
      \whitneybox
      \node at (0.5,0.5) {\scalebox{0.8}{$1$}};
    \end{scope}
    \begin{scope}[shift={(0,-1)},scale=1]
      \whitneybox
      \node at (0.5,0.5) {\scalebox{0.8}{$-1$}};
    \end{scope}
    \begin{scope}[shift={(0,1)},scale=1]
      \whitneybox
      \node at (0.5,0.5) {\scalebox{0.8}{$1$}};
    \end{scope}
    \begin{scope}[shift={(2,3)},scale=1]
      \whitneybox
      \node at (0.5,0.5) {\scalebox{0.8}{$1$}};
    \end{scope}

    \begin{scope}[shift={(3,2)},scale=1]
      \whitneybox
      \node at (0.5,0.5) {\scalebox{0.8}{$-1$}};
    \end{scope}
    \begin{scope}[shift={(1,0)},scale=1]
      \whitneybox
      \node at (0.5,0.5) {\scalebox{0.8}{$-1$}};
    \end{scope}
    \begin{scope}[shift={(-1,-2)},scale=1]
      \whitneybox
      \node at (0.5,0.5) {\scalebox{0.8}{$-1$}};
    \end{scope}

    \begin{scope}[shift={(1.5,3)},scale=0.5]
      \whitneybox
      \node at (0.5,0.5) {\scalebox{0.6}{$1$}};
    \end{scope}
    \begin{scope}[shift={(-.5,1)},scale=0.5]
      \whitneybox
      \node at (0.5,0.5) {\scalebox{0.6}{$1$}};
    \end{scope}
    \begin{scope}[shift={(-1.5,0)},scale=0.5]
      \whitneybox
      \node at (0.5,0.5) {\scalebox{0.6}{$1$}};
    \end{scope}
    \begin{scope}[shift={(1,-.5)},scale=0.5]
      \whitneybox
      \node at (0.5,0.5) {\scalebox{0.6}{$-1$}};
    \end{scope}

    \begin{scope}[shift={(1,2)},scale=1]
      \whitneybox
      \node at (0.5,0.5) {\scalebox{0.8}{$1$}};
    \end{scope}
    \begin{scope}[shift={(2,1)},scale=1]
      \whitneybox
      \node at (0.5,0.5) {\scalebox{0.8}{$1$}};
    \end{scope}
    \begin{scope}[shift={(.5,2)},scale=0.5]
      \whitneybox
      \node at (0.5,0.5) {\scalebox{0.6}{$1$}};
    \end{scope}
    \begin{scope}[shift={(2,.5)},scale=0.5]
      \whitneybox
      \node at (0.5,0.5) {\scalebox{0.6}{$-1$}};
    \end{scope}
    \begin{scope}[shift={(3,1.5)},scale=0.5]
      \whitneybox
      \node at (0.5,0.5) {\scalebox{0.6}{$-1$}};
    \end{scope}
    \begin{scope}[shift={(0,-1.5)},scale=0.5]
      \whitneybox
      \node at (0.5,0.5) {\scalebox{0.6}{$-1$}};
    \end{scope}

    \draw[red] (-1.6,-1.6) -- (3.6,3.6);
    \draw[red,dashed] (-2.6,-0.4) -- (2.6,4.8);
    \draw[red] (-1.6,-1.6) -- (3.6,3.6);
    \draw[red,dashed] (-0.3,-2.5) -- (5,2.8);
%    \draw[<->] (-0.3,-2.5) -- (-2.53,-0.349);
    \draw[black] (0.25,0.95) -- (-1.1,4);
    \node[black] at (-1.1,4.15) {\text{$\support(\nabla u_{\lambda})$}};
  \end{tikzpicture}
  \begin{tikzpicture}[scale=0.8]
    \begin{scope}[blue!80!yellow,shift={(-0.14,0.14)}];
      \draw[rounded corners=1.5mm]
      (-2.0,-1.0) --
      (-1.0,-1.0) -- (-1.0,0.0) --
      (0.0,0.0) -- (0.0,1.0) --
      (1.0,1.0) -- (1.0,2.0) --
      (2.0,2.0) -- (2.0,3.0) --
      (3.0,3.0) -- (3.0,4.0);
    \end{scope}
    \begin{scope}[blue!50!yellow,shift={(-0.08,0.08)}];
      \draw[rounded corners=1.5mm]
      (-2.0,-1.0) --
      (-1.0,-1.0) -- (-1.0,0.0) --
      (0.0,0.0) -- (0.0,1.0) --
      (1.0,1.0) -- (1.0,2.0) --
      (2.0,2.0) -- (2.0,3.0) --
      (3.0,3.0) -- (3.0,4.0);
    \end{scope}
    \begin{scope}[blue!30!yellow,shift={(-0.02,0.02)}];
      \draw[rounded corners=1.5mm]
      (-2.0,-1.0) --
      (-1.0,-1.0) -- (-1.0,0.0) --
      (0.0,0.0) -- (0.0,1.0) --
      (1.0,1.0) -- (1.0,2.0) --
      (2.0,2.0) -- (2.0,3.0) --
      (3.0,3.0) -- (3.0,4.0);
    \end{scope}
    \begin{scope}[blue!80!yellow,shift={(0.14,-0.14)}];
      \draw[rounded corners=1.5mm]
      (-1.0,-2.0) --
      (-1.0,-1.0) -- (0.0,-1.0) --
      (0.0,0.0) -- (1.0,0.0) --
      (1.0,1.0) -- (2.0,1.0) --
      (2.0,2.0) -- (3.0,2.0) --
      (3.0,3.0) -- (4.0,3.0);
    \end{scope}
    \begin{scope}[blue!50!yellow,shift={(0.08,-0.08)}];
      \draw[rounded corners=1.5mm]
      (-1.0,-2.0) --
      (-1.0,-1.0) -- (0.0,-1.0) --
      (0.0,0.0) -- (1.0,0.0) --
      (1.0,1.0) -- (2.0,1.0) --
      (2.0,2.0) -- (3.0,2.0) --
      (3.0,3.0) -- (4.0,3.0);
    \end{scope}
    \begin{scope}[blue!30!yellow,shift={(0.02,-0.02)}];
      \draw[rounded corners=1.5mm]
      (-1.0,-2.0) --
      (-1.0,-1.0) -- (0.0,-1.0) --
      (0.0,0.0) -- (1.0,0.0) --
      (1.0,1.0) -- (2.0,1.0) --
      (2.0,2.0) -- (3.0,2.0) --
      (3.0,3.0) -- (4.0,3.0);
    \end{scope}
    \draw[red] (-1.6,-1.6) -- (3.6,3.6);
    \draw[red,dashed] (-2.6,-0.4) -- (2.6,4.8);
    \draw[red] (-1.6,-1.6) -- (3.6,3.6);
    \draw[red,dashed] (-0.3,-2.5) -- (5,2.8);
  \end{tikzpicture}
  \caption{Example of Remark~\ref{rem:failure}. Left: The picture
    shows a zoom of the Whitney cover at the diagonal. The numbers
    indicate the mean values~$\mean{u}_{Q_j}$. The function~$u_\lambda$ is
    locally constant outside the shaded region.  Right: The picture
    shows the resulting isolines of~$u_\lambda$.}

\label{fig:whitney}
\end{figure}
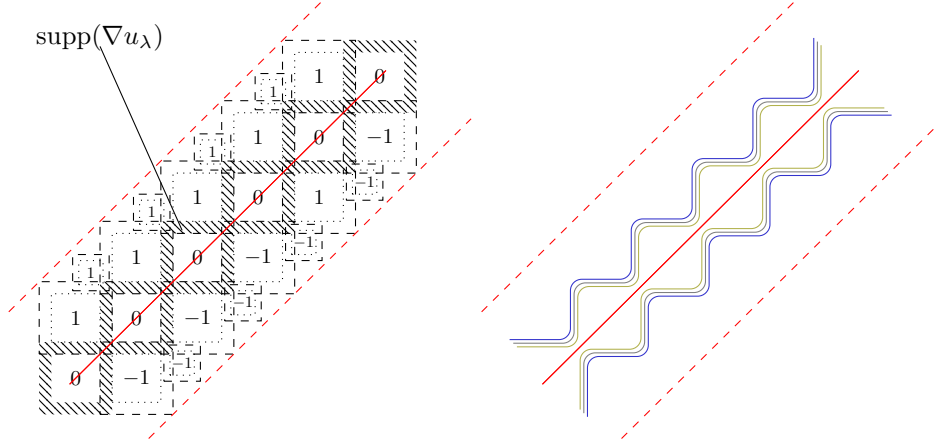
\end{remark}

On the other hand, it is well-known that mollification of a~$\setBV$
function yields a strictly convergent approximation. However, this
approximation would differ in general from the original function
almost everywhere. Therefore, we combine the standard Lipschitz
truncation with a local mollification to obtain our new Lipschitz
truncation converging even in the area-strict sense.

For this purpose, let $h \,:\, (0,\infty) \to (0,1]$ be a
non-increasing function with
$\lim_{\lambda \to \infty} h(\lambda) = 0$. Let $\phi$ be a smooth,
non-negative, radially symmetric mollifier with support in the unit
ball.  For $j\in\mathbb{N}$ let
\begin{align}
  \label{eq:def_phi_j}
  \phi_j(x) &:= (\epsilon_j)^{-n}
              \phi(x/\epsilon_j) \qquad \text{with}
              \qquad \epsilon_j := h(\lambda)\, \tfrac 14
              \,r_j. 
\end{align}
In particular, the supports of the $\phi_j$ shrink faster than the
cubes~$Q_j$ by the factor of~$h(\lambda)$.
Furthermore, we define
\begin{align}
  \label{eq:defBj}
  \mathcal{B}_j u&:= \eta_j (u-u_j) - \phi_j * (\eta_j
                   (u-u_j)).
\end{align}
Then we have $\support(\mathcal{B}_j) \subset Q_j$ due to
$\support(\eta_j) \subset \tfrac 34 Q_j$ and the choice
of~$\epsilon_j$. We now define our truncation operator~$T_\lambda$ by
\begin{align}\label{eq:lipdef}
  T_\lambda u := u_\lambda &:= u - \sum_j \mathcal{B}_j u.
\end{align}
The special choice of this truncation operator will become clear later
when we consider its (almost) dual operator in
Lemma~\ref{lem:Slambda}. As we will see, the map $T_{\lambda}u$ defines an
element in $\setBV(\Rn)$, cf. Lemma~\ref{lem:Bj-stability}.

\subsection{Properties of the Lipschitz truncation}
\label{sec:prop-lipsch-trunc}

In this subsection we study important properties of the Lipschitz
truncation~$T_\lambda u$. We begin with the stability estimates.
\begin{lemma}[Stability]\label{lem:Bj-stability}
  There exists a constant $c=c(n)>0$ such that we have the following $\lebe^{1}$- and $\setBV$-stability estimates for all $1\leq q \leq \frac{n}{n-1}$ and all $j\in\mathbb{N}$:
  \begin{align}\label{eq:stab1}
    \|\mathcal{B}_j u\|_q \leq c\,\int_{Q_j}|u|\dif
    x\hspace{0.5cm}\text{and}\hspace{0.5cm}|D (\mathcal{B}_j u)|(\R^n)
    \leq c\,|Du|(Q_j).  
  \end{align}
  The sum $\mathcal{B}_\lambda u := \sum_j \mathcal{B}_j u$ converges
  unconditionally in $\setBV(\Rn)$ together with 
  \begin{align}\label{eq:stab2}
    \|\mathcal{B}_\lambda u\|_{q}^{q} \leq
    c\,\int_{\mathcal{O}_{\lambda}}|u|^{q}\dif
    x\hspace{0.5cm}\text{and}\hspace{0.5cm} 
    |D (\mathcal{B}_\lambda u)|(\R^n) \leq c\,|Du|(\mathcal{O}_{\lambda}).
  \end{align}
  As a consequence, we have 
  \begin{align}\label{eq:stab3}
    \|T_\lambda u\|_{q} &\leq c\,\norm{u}_{q} \qquad
                                \text{and} \qquad
                                |D (T_\lambda u)|(\R^n) \leq c\,|Du|(\R^n).
  \end{align}
\end{lemma}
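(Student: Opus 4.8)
The plan is to prove the three groups of estimates in order: first the single-cube bounds \eqref{eq:stab1}, then the summed bounds \eqref{eq:stab2}, and finally deduce \eqref{eq:stab3} for $T_\lambda u$ by summation. The main point throughout is that mollification with $\phi_j$ is non-expansive on every $\lebe^p$ and does not increase total variation, so $\mathcal{B}_j u = \eta_j(u-u_j) - \phi_j * (\eta_j(u-u_j))$ is controlled by $\eta_j(u-u_j)$ alone, and the finite-overlap property \ref{W5} (together with \ref{itm:whit_sum}) converts single-cube estimates into global ones.

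For \eqref{eq:stab1}: since $\support \phi_j$ has radius $\epsilon_j = h(\lambda)\tfrac14 r_j \leq \tfrac14 r_j$ and $\support(\eta_j(u-u_j)) \subset \tfrac34 Q_j$, the convolution $\phi_j * (\eta_j(u-u_j))$ is supported in $Q_j$, so $\support(\mathcal{B}_j u) \subset Q_j$ as already noted. Then $\|\mathcal{B}_j u\|_1 \leq 2\|\eta_j(u-u_j)\|_1 \leq 2\int_{Q_j}|u-u_j|\dif x \leq c\int_{Q_j}|u|\dif x + c|Q_j||u_j|$; when $u_j = \mean{u}_{Q_j}$ this is $\leq c\int_{Q_j}|u|\dif x$, and when $u_j = 0$ it is trivial. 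For $1 < q \leq \tfrac{n}{n-1}$ one combines the $\lebe^1$ bound with the $\setBV$ bound via the Sobolev embedding $\setBV(\R^n) \hookrightarrow \lebe^{n/(n-1)}(\R^n)$ applied to $\mathcal{B}_j u$ (which has compact support): $\|\mathcal{B}_j u\|_{n/(n-1)} \leq c|D(\mathcal{B}_j u)|(\R^n)$, and then interpolate, or more directly scale the Sobolev–Poincaré inequality on $Q_j$. For the gradient bound, $D(\mathcal{B}_j u) = D(\eta_j(u-u_j)) - \phi_j * D(\eta_j(u-u_j))$, so $|D(\mathcal{B}_j u)|(\R^n) \leq 2|D(\eta_j(u-u_j))|(\R^n) \leq 2\int_{\frac34 Q_j}|\nabla\eta_j||u-u_j|\dif x + 2\int_{\frac34 Q_j}\eta_j\dif|Du|$; the first term is estimated by \ref{itm:P3}, $\|\nabla\eta_j\|_\infty \leq c/r_j$, and Lemma~\ref{lem:1}\ref{item:aux1}, yielding $\leq c|Du|(Q_j)$; the second is $\leq |Du|(Q_j)$ directly. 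This gives \eqref{eq:stab1}.

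For \eqref{eq:stab2}: unconditional convergence in $\setBV(\R^n)$ follows because $\sum_j |D(\mathcal{B}_j u)|(\R^n) \leq c\sum_j |Du|(Q_j) \leq c\,N|Du|(\mathcal{O}_\lambda) < \infty$ by \ref{W5}, and likewise $\sum_j \|\mathcal{B}_j u\|_1 \leq c\sum_j \int_{Q_j}|u|\dif x \leq cN\int_{\mathcal{O}_\lambda}|u|\dif x < \infty$, so the partial sums form a Cauchy net in $\setBV(\R^n)$. The gradient bound $|D(\mathcal{B}_\lambda u)|(\R^n) \leq \sum_j |D(\mathcal{B}_j u)|(\R^n) \leq c\sum_j |Du|(Q_j) \leq c|Du|(\mathcal{O}_\lambda)$ uses \ref{W5} one more time, noting $Q_j \subset \mathcal{O}_\lambda$ by \ref{W2}. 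For the $\lebe^q$ bound I would use the bounded-overlap structure: at a.e.\ point $x$ at most $N = 120^n$ of the cubes $Q_j$ contain $x$, so by Hölder in the counting measure $\bigl|\sum_j \mathcal{B}_j u(x)\bigr|^q \leq N^{q-1}\sum_j |\mathcal{B}_j u(x)|^q$; integrating and using that $\support(\mathcal{B}_j u) \subset Q_j \subset \mathcal{O}_\lambda$ together with the pointwise consequence of \eqref{eq:stab1}-type local reasoning gives $\|\mathcal{B}_\lambda u\|_q^q \leq c\int_{\mathcal{O}_\lambda}|u|^q\dif x$. Finally \eqref{eq:stab3} is immediate: $T_\lambda u = u - \mathcal{B}_\lambda u$, so $\|T_\lambda u\|_q \leq \|u\|_q + \|\mathcal{B}_\lambda u\|_q \leq \|u\|_q + c\|u\|_{\lebe^q(\mathcal{O}_\lambda)} \leq c\|u\|_q$, and $|D(T_\lambda u)|(\R^n) \leq |Du|(\R^n) + |D(\mathcal{B}_\lambda u)|(\R^n) \leq |Du|(\R^n) + c|Du|(\mathcal{O}_\lambda) \leq c|Du|(\R^n)$.

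The main obstacle is the $\lebe^q$ estimate for $q > 1$ in both \eqref{eq:stab1} and \eqref{eq:stab2}: the $\lebe^1$-$\setBV$ argument gives $\lebe^1$ bounds for free, but upgrading to $\lebe^{n/(n-1)}$ requires invoking the Gagliardo–Nirenberg–Sobolev embedding $\setBV(\R^n) \hookrightarrow \lebe^{n/(n-1)}(\R^n)$ (legitimate since each $\mathcal{B}_j u$, and hence $\mathcal{B}_\lambda u$, has support in the bounded set $\mathcal{O}_\lambda$ when $|\mathcal{O}_\lambda| < \infty$, and in general one works with compactly supported pieces and passes to the limit), and then controlling the sum requires being careful that the overlap constant $N$ enters only through the harmless factor $N^{q-1}$ rather than being squared. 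Everything else is a routine application of non-expansivity of mollification, the partition-of-unity bounds \ref{itm:P1}–\ref{itm:P3}, Lemma~\ref{lem:1}, and the finite-overlap properties \ref{W5}, \ref{itm:whit_sum}.
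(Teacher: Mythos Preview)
Your proposal is correct and follows essentially the same route as the paper: non-expansivity of mollification, the product rule plus Poincar\'e for the gradient bound, and the finite-overlap property \ref{W5} to pass from single cubes to the sum. The one place where you diverge is the $\lebe^q$ bound for $q>1$: you detour through the Sobolev embedding $\setBV\hookrightarrow\lebe^{n/(n-1)}$ and interpolation, whereas the paper simply applies Young's convolution inequality $\|\phi_j * f\|_q \leq \|f\|_q$ (which you yourself flagged in your opening sentence) to get
\[
\int_{\R^n}|\mathcal{B}_j u|^q\dif x \leq c\int_{Q_j}|u-u_j|^q\dif x \leq c\int_{Q_j}|u|^q\dif x
\]
directly, with no appeal to Sobolev at all. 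This is both shorter and yields exactly the cube-wise inequality needed to sum to \eqref{eq:stab2}. Note in passing that the paper's own proof establishes $\|\mathcal{B}_j u\|_q^q \leq c\int_{Q_j}|u|^q\dif x$ rather than the literal first inequality in \eqref{eq:stab1} (whose right-hand side $\int_{Q_j}|u|\dif x$ does not scale correctly for $q>1$); your Sobolev route would give $\|\mathcal{B}_j u\|_{n/(n-1)}\leq c|Du|(Q_j)$, which is also not the literal \eqref{eq:stab1}. Either version suffices for everything downstream.
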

\begin{proof}
  Recall that $\bv(\Rn)\hookrightarrow\lebe^{\frac{n}{n-1}}(\Rn)$.
  Since, 
  $\support(\varphi_{j}*(\eta_{j}(u-u_{j})))\subset Q_{j}$, we
  directly find by Young's convolution inequality:
  \begin{align*}
    \int_{\R^n}|\mathcal{B}_{j}u|^{q}\dif x & = \int_{\frac 34 Q_j}|\eta_j (u-u_j) - \phi_j * (\eta_j (u-u_j)) |^{q}\dif x \leq\,c\,\int_{Q_{j}}|u|^{q}\dif x .
  \end{align*}
  Moreover, for each $j\in\mathbb{N}$ we obtain 
  \begin{align*}
    \abs{D(\mathcal{B}_j u)}(\R^n) 
    &= \bigabs{ D\big( \eta_j (u-u_j) -
      \phi_j * (\eta_j (u-u_j))
      \big)}(\R^n)
    \\
    &\leq 2\,\abs{ D( \eta_j (u-u_j))}(\R^n)
    \\
    &\leq c\,\abs{Du}(\tfrac 34 Q_j) + c\, \int_{Q_j}
      \frac{\abs{u-u_j}}{r_j} \dif x
    \\
    &\leq c\, \abs{Du}(Q_j),
  \end{align*}
  where we used Poincar\'{e}'s inequality in the last step. This yields \eqref{eq:stab1}. Now, \eqref{eq:stab2} follows by summing over $j$ and using the   finite intersection property of the $Q_j$, cf.~\ref{W5}. Finally, \eqref{eq:stab3} is a straightforward consequence of \eqref{eq:lipdef} and \eqref{eq:stab2}. The proof is complete. 
\end{proof}

We will now show that $T_\lambda u$ is in fact a Lipschitz continuous
function.
\begin{lemma}
  \label{lem:lip}
  There exists a constant $c=c(n)>0$ such that for all $\lambda>0$ there holds
  \begin{align}
    \label{eq:keyest}
    \mathcal{M} (D T_\lambda u) &\leq
                                  c\,\frac{\lambda}{h(\lambda)^{n+1}}.
  \end{align}
\end{lemma}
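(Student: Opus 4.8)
The plan is to estimate $\mathcal{M}_Q(DT_\lambda u)$ for an arbitrary cube $Q$ of side length $r$, by splitting via~$DT_\lambda u = Du - \sum_j D\mathcal{B}_j u$, and to invoke the geometric dichotomy of Lemma~\ref{lem:alternatives}. First I would recall that on the good set $T_\lambda u = u$, so the dangerous contributions come from cubes~$Q$ meeting~$\mathcal O_\lambda$, and I would test~$D T_\lambda u$ against $\varphi \in C_0(Q;\R^n)$ with $\|\varphi\|_\infty \leq 1$ (using the representation~\eqref{eq:max}). The quantity to bound is $\frac{1}{r^n}\bigl|\skp{Du}{\varphi} - \sum_j \skp{D\mathcal B_j u}{\varphi}\bigr|$; since each $\support(\mathcal B_j u)\subset Q_j$, only those $j$ with $Q_j\cap Q\neq\emptyset$ enter, and by~\ref{W5}/\ref{itm:whit_sum} there are boundedly many of each dyadic generation meeting~$Q$.

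Case~\ref{itm:alt_small}: there is a single Whitney cube $Q_k$ with $8r\le r_k$ and $Q\subset\frac34 Q_k$. Then on $Q$ the partition of unity satisfies $\sum_{j\in A_k}\eta_j = 1$ by~\ref{itm:P4}, so $\sum_j \mathcal B_j u = \sum_{j\in A_k}\bigl(\eta_j(u-u_j) - \phi_j*(\eta_j(u-u_j))\bigr)$; writing $u = \sum_{j\in A_k}\eta_j u$ on $Q$ and telescoping, $T_\lambda u$ coincides on $Q$ with $\sum_{j\in A_k}\bigl(\eta_j u_j + \phi_j*(\eta_j(u-u_j))\bigr)$, a $C^\infty$ function. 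I would bound $\nabla T_\lambda u$ pointwise on $Q$: the $\nabla\eta_j$-terms give $r_k^{-1}|u_j - u_k| \le c\lambda$ using Lemma~\ref{lem:1}\ref{item:aux3} (and $\sum\nabla\eta_j = 0$ on $Q$), while differentiating $\phi_j*(\eta_j(u-u_j))$ produces $\nabla\phi_j*(\ldots)$, which costs a factor $\epsilon_j^{-1} = \tfrac{4}{h(\lambda) r_j}$; together with $\dashint_{Q_j}|u-u_j|/r_j\,dx \le c\lambda$ from Lemma~\ref{lem:1}\ref{item:aux1} this yields $|\nabla T_\lambda u|\le c\lambda/h(\lambda)$ on $Q$, hence $\mathcal M_Q(DT_\lambda u)\le c\lambda/h(\lambda)$.

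Case~\ref{itm:alt_large}: every Whitney cube $Q_j$ meeting $\frac34 Q$ has $r_j\le 16 r$ and $|Q_j|\le 8^n|Q_j\cap Q|$, and $137Q\cap\mathcal O_\lambda^\complement\neq\emptyset$, so $|Du|(137Q)\le c r^n \mathcal M(Du)(z)\le c\lambda r^n$ for the witness point $z$. Here I would use the crude bound $|\skp{Du}{\varphi}| \le |Du|(Q) \le c\lambda r^n$ directly, and $|\skp{D\mathcal B_j u}{\varphi}| \le |D\mathcal B_j u|(\R^n)\le c|Du|(Q_j)$ by Lemma~\ref{lem:Bj-stability}, \eqref{eq:stab1}. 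Since all contributing $Q_j$ satisfy $Q_j\subset 137 Q$ (as $r_j \le 16r$ and $Q_j$ meets $\frac34 Q$) and have bounded overlap by~\ref{W5}, $\sum_j |Du|(Q_j)\le c|Du|(137Q)\le c\lambda r^n$; dividing by $r^n$ gives $\mathcal M_Q(DT_\lambda u)\le c\lambda$ — in fact no $h(\lambda)$ loss occurs in this case. The main obstacle is Case~\ref{itm:alt_small}: one must carefully track the telescoping identity for the partition of unity and, crucially, observe that the mollification term's $\epsilon_j^{-1}$ derivative is the sole source of the $h(\lambda)^{-1}$ factor; iterating this reasoning through the structure does not in fact cost $n+1$ powers here, so to match the statement one states the weaker bound $c\lambda/h(\lambda)^{n+1}$, which dominates $c\lambda/h(\lambda)$ since $h(\lambda)\le 1$. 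Taking the supremum over all cubes $Q$ through~$x$ completes the proof.
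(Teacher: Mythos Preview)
Your overall strategy matches the paper's: split via the geometric dichotomy of Lemma~\ref{lem:alternatives}, handle~\ref{itm:alt_large} by the stability bound~\eqref{eq:stab1} together with bounded overlap and $137Q\cap\mathcal O_\lambda^\complement\neq\emptyset$, and handle~\ref{itm:alt_small} by rewriting $T_\lambda u$ on $Q\subset\frac34 Q_k$ as the smooth expression $\sum_{j\in A_k}\bigl(\eta_j u_j+\phi_j*(\eta_j(u-u_j))\bigr)$. Case~\ref{itm:alt_large} is fine.

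The gap is in Case~\ref{itm:alt_small}. You assert that passing the gradient onto the mollifier ``costs a factor $\epsilon_j^{-1}$'' and then combine this with $\dashint_{Q_j}|u-u_j|/r_j\,dx\le c\lambda$ to conclude $|\nabla T_\lambda u|\le c\lambda/h(\lambda)$. But the only way $\nabla\phi_j*g$ costs a single factor $\epsilon_j^{-1}$ is via $\|\nabla\phi_j\|_1\sim\epsilon_j^{-1}$ and Young's inequality, which requires an $L^\infty$ bound on $g=\eta_j(u-u_j)$. For a general $u\in\setBV$ no such bound is available. What \emph{is} available is the $L^1$ control $\int_{Q_j}|u-u_j|\,dx\le c\,r_j^{n+1}\lambda$ from Lemma~\ref{lem:1}\ref{item:aux1}, and pairing this with $\|\nabla\phi_j\|_\infty\le c\,\epsilon_j^{-n-1}$ gives
\[
\bigl|\nabla\phi_j*(\eta_j(u-u_j))\bigr|\;\le\;\|\nabla\phi_j\|_\infty\int_{Q_j}|u-u_j|\,dx\;\le\;c\,\epsilon_j^{-n-1}r_j^{n+1}\lambda\;=\;c\,\frac{\lambda}{h(\lambda)^{n+1}}.
\]
This is precisely the computation the paper carries out, and it is the true source of the exponent $n+1$ in the statement; the bound $c\lambda/h(\lambda)^{n+1}$ is \emph{not} a gratuitous weakening of a sharper $c\lambda/h(\lambda)$ as you suggest, but the honest outcome of the only estimate available at this step. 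Your claim that ``iterating this reasoning does not in fact cost $n+1$ powers'' is therefore incorrect.
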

\begin{proof}
  Let $Q$ be an open cube with side length~$r$. We use the alternatives
  of Lemma~\ref{lem:alternatives}. We begin with alternative~\emph{\ref{itm:alt_small}}.  In this case, there
  exists $k \in \setN$ such that $Q \cap \frac 12 Q_k \neq \emptyset$,
  $8 r \leq r_k$ and $Q \subset \frac 34 Q_k$.  Then
  $T_\lambda u 
  = \sum_{j \in A_k} (\eta_j u_j +\phi_j * (\eta_j
  (u-u_j)) )$ 
  on $Q$ by \ref{W2} and therefore
  \begin{align*}
    D T_\lambda u 
    &= \sum_{j\in A_k} \nabla\Big(\eta_j (u_j-u_k) +\phi_j * (\eta_j
      (u-u_j)) \Big) 
    \\
    &=\sum_{j\in A_k}\Big(\nabla\eta_j (u_j-u_k) \Big)
      +\sum_{j\in A_k} \nabla\phi_j *\big( \eta_j
      (u-u_j) \big).
  \end{align*}
  Thus, by \ref{itm:P3} and $\mathcal{M}_{Q}f \leq |f|$ for all $f\in\lebe^{\infty}(\R^{n})$,
  \begin{align*}
    \mathcal{M}_Q(D T_\lambda u) 
    &\leq
      \sum_{j\in A_k}\mathcal{M}_Q\big((u_j-u_k) \nabla\eta_{j} \big)
      +\sum_{j\in A_k} \mathcal{M}_Q\Big(\nabla \phi_j *\big(\eta_j
      (u-u_j) \big)\Big)
    \\
    &\leq
      \sum_{j\in A_k}\mathcal{M}_Q\big((u_j-u_k) \nabla\eta_j\big)
      +\sum_{j\in A_k} \bignorm{\nabla \phi_j *\big(\eta_j
      (u-u_j) \big)}_\infty
    \\
    &\leq c\,
      \sum_{j\in A_k} \frac{\abs{u_j-u_k}}{r_k}
      +
      \sum_{j\in A_k} \norm{\nabla \phi_j}_\infty \int_{Q_j} \abs{u-u_j}\,dx
    \\
    &\leq c\,
      \sum_{j\in A_k} \frac{\abs{u_j-u_k}}{r_k}
      +
      c \sum_{j\in A_k} \epsilon_j^{-n-1} \int_{Q_j} \abs{u-u_j}\,dx
  \end{align*}
  using that $\support(\eta_j) \subset \frac 34Q_j$, $\epsilon_j \leq
  \frac 14 r_j$, and the properties of~$\phi_j$. Now,
  Lemma~\ref{lem:1}, $\epsilon_j = h(\lambda) \frac 14 r_j$ and
  $h(\lambda) \leq 1$ imply
  \begin{align}\label{eq:keyestsemi}
    \mathcal{M}_Q(D T_\lambda u) &\leq c\, \lambda h(\lambda)^{-n-1}.
  \end{align}
  We turn to alternative~\emph{\ref{itm:alt_large}}. In particular, for all
  $j \in\setN$ with $Q \cap \frac 34 Q_j \neq \emptyset$, there holds
  $r_j \leq 16 r$ and $\abs{Q_j} \leq 8^n \abs{Q_j \cap Q}$.
  Moreover, $137 Q \cap (\Rn \setminus \mathcal{O}_\lambda) \neq \emptyset$.
  Recall that $T_\lambda u = u - \sum_j \mathcal{B}_j u$ with
  convergence of the sum in the norm topology on $\setBV(\Rn)$, see
  Lemma~\ref{lem:Bj-stability}. Thus,
  \begin{align}\label{eq:Lipschitzsemi1}
    \mathcal{M}_Q(D T_\lambda u) &\leq \mathcal{M}_Q(D u) +
                                   \sum_{j\,:\, Q \cap \frac 34 Q_j \neq \emptyset}
                                   \mathcal{M}_Q(D \mathcal{B}_j u).
  \end{align}
  We address the estimation of the single terms $\mathcal{M}_{Q}(D\mathcal{B}_{j}u)$ first. We start by noting that for any $v\in\sobo^{1,1}(\R^{n})$ with support in $\frac{3}{4}Q_{j}$ there holds 
  \begin{align*}
    \mathcal{M}_{Q_{j}}\big(\phi_j * Dv \big)& \leq \frac{1}{|Q_{j}|}\int_{Q_{j}}\int_{\R^{n}}|\varphi_{j}(y)|\,|Dv(x-y)|\dif y\dif x\\
                                             & = \frac{1}{|Q_{j}|}\int_{\R^{n}}|\varphi_{j}(y)|\,\int_{Q_{j}}|Dv(x-y)|\dif x\dif y \\ 
                                             & \leq \dashint_{Q_{j}}|Dv|\dif x = \mathcal{M}_{Q_{j}}(Dv)
  \end{align*}
  since $\support(\eta_{j})+\support(\varphi_{j})\Subset Q_{j}$. If $v\in\setBV(\R^{n})$ has support in $\frac{3}{4}Q_{j}$, choose a sequence $(v_{k})\subset\sobo^{1,1}(\R^{n})$ such that $\support(v_{k})\subset\frac{3}{4}Q_{j}$ and $v_{k}\to v$ strictly in $\setBV(\R^{n})$. Clearly, $\varphi_{j}*v_{k}\to\varphi_{j}*v$ in $\lebe_{\locc}^{1}(\R^{n})$ and since $D(\varphi_{j}*v)=\varphi_{j}*Dv$, lower semicontinuity of the total variation with respect to $\lebe_{\locc}^{1}$-convergence implies 
  \begin{align*}
    \mathcal{M}_{Q_{j}}(\varphi_{j}*Dv)
    & \leq
      \mathcal{M}_{Q_{j}}(D(\varphi_{j}*v))
    \\ 
    & \leq
      \liminf_{k\to\infty}\mathcal{M}_{Q_{j}}(D(\varphi_{j}*v_{k}))
    \\
    & = \liminf_{k\to\infty}\mathcal{M}_{Q_{j}}(\varphi_{j}*Dv_{k}) \leq  \liminf_{k\to\infty}\mathcal{M}_{Q_{j}}(Dv_{k}) \leq \mathcal{M}_{Q_{j}}(Dv)
  \end{align*}
  provided $\support(Dv)$ is a closed subset of $Q_{j}$. Applying the previous inequality to $v=\eta_{j}(u-u_{j})$, we estimate 
  \begin{align}\label{eq:Lipschitzsemi2}
    \begin{split}
      \mathcal{M}_Q(D \mathcal{B}_j u) 
      &\leq \mathcal{M}_Q\big(  D(\eta_j
      (u-u_j)\big) + \mathcal{M}_{Q}\big(\phi_j * D(\eta_j 
      (u-u_j)) \big)
      \\
      &\leq \Big( \frac{|Q_{j}|}{|Q|}\Big(\mathcal{M}_{Q_{j}}\big(  D(\eta_j
      (u-u_j))\big) + \mathcal{M}_{Q_{j}}\big(\phi_j * D(\eta_j 
      (u-u_j)) \big)\Big) \\ 
      & \leq  c\Big( \frac{|Q_{j}\cap Q|}{|Q|}\Big(\mathcal{M}_{Q_{j}}\big(  D(\eta_j
      (u-u_j))\big)\Big), 
    \end{split}
  \end{align}
  the geometric alternative \emph{\ref{itm:alt_large}} having entered in the last step only. By Lemma~\ref{lem:1}\ref{item:aux1}, we thus obtain 
  \begin{align}\label{eq:Lipschitzsemi3}
    \mathcal{M}_{Q_j} \big(  D(\eta_j
    (u-u_j)\big) &\le c\, \dashint_{Q_j} \frac{\abs{u-u_j}}{r_j}\,dx + c\,
                   \mathcal{M}_{Q_j} (Du)  \leq c\, \lambda.
  \end{align}
  Thus, combining \eqref{eq:Lipschitzsemi1}, \eqref{eq:Lipschitzsemi2} and \eqref{eq:Lipschitzsemi3}, \emph{\ref{itm:alt_large}} and the finite intersection of the $Q_{j}$'s, cf.~\ref{W5}, imply
  \begin{align*}
    \mathcal{M}_Q(\nabla T_\lambda u)
    &\leq \mathcal{M}_{137 Q}(Du) + c \!\!\!\sum_{j\,:\, Q \cap \frac
      34 Q_j \neq \emptyset} \frac{\abs{Q_j\cap Q}}{\abs{Q}} \lambda
      \leq c\,\lambda.
  \end{align*}
  Recalling \eqref{eq:keyestsemi} and that $h\colon (0,\infty)\to (0,1]$, the proof is hereby complete. 
\end{proof}
The following corollary justifies the name \emph{Lipschitz truncation}.
\begin{corollary}
  \label{cor:lip}
  For each $\lambda>0$ we have $T_\lambda u \in W^{1,\infty}(\Rn)$. More precisely, there exists $c=c(n)>0$ such that for all $\lambda>0$ there holds $\norm{\nabla T_\lambda u}_\infty \leq
  c\,\frac{\lambda}{h(\lambda)^{n+1}}$.
\end{corollary}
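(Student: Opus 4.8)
The plan is to derive Corollary~\ref{cor:lip} directly from Lemma~\ref{lem:lip} together with the characterisation of Lipschitz functions via the maximal function of their gradient measure. Recall from \eqref{eq:fractional1} that for a locally integrable function $v$ one has $\mathcal{M}(Dv)(x) = \sup_{Q \ni x} |Dv|(Q)/\ell(Q)^n$; in particular, if $\mathcal{M}(Dv) \leq K$ everywhere, then $|Dv|(Q) \leq K\,\ell(Q)^n$ for every cube~$Q$, which forces $Dv \ll \mathscr{L}^n$ with $|\nabla v| \leq K$ a.e. by Lebesgue differentiation, hence $\nabla v \in L^\infty(\Rn)$ with $\|\nabla v\|_\infty \leq K$.

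First I would record that $T_\lambda u \in \setBV(\Rn)$ by Lemma~\ref{lem:Bj-stability}, so that $D T_\lambda u$ is a finite Radon measure and the maximal operator $\mathcal{M}(D T_\lambda u)$ is well defined. Then I apply Lemma~\ref{lem:lip} to get the pointwise bound
\begin{align*}
  \mathcal{M}(D T_\lambda u)(x) \leq c\,\frac{\lambda}{h(\lambda)^{n+1}} \qquad \text{for all } x \in \Rn.
\end{align*}
By the definition of $\mathcal{M}$ in \eqref{eq:fractional1}, this means $|D T_\lambda u|(Q) \leq c\,\lambda\,h(\lambda)^{-n-1}\,\ell(Q)^n$ for every cube~$Q$. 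Since a finite measure that is dominated by a constant multiple of Lebesgue measure on all cubes is absolutely continuous, we conclude $D T_\lambda u = \nabla T_\lambda u \,\mathscr{L}^n$ with $|\nabla T_\lambda u(x)| \leq c\,\lambda\,h(\lambda)^{-n-1}$ for $\mathscr{L}^n$-a.e.~$x$; equivalently $\nabla T_\lambda u \in L^\infty(\Rn)$ with the asserted norm bound. Combined with $T_\lambda u \in L^1_{\mathrm{loc}}(\Rn)$ (indeed $T_\lambda u - u$ has compact support and $T_\lambda u \in L^{n/(n-1)}(\Rn)$ by the stability estimate \eqref{eq:stab3}), this gives $T_\lambda u \in W^{1,\infty}(\Rn)$.

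There is essentially no obstacle here; the content of the corollary is entirely contained in Lemma~\ref{lem:lip}, and what remains is the standard observation that control of the maximal function of the gradient measure upgrades to an $L^\infty$ bound on the gradient. The only minor point worth a line is justifying that $D T_\lambda u$ has no singular part: this follows because $|D T_\lambda u|(B_r(x))/|B_r(x)|$ is uniformly bounded, so by the Besicovitch differentiation theorem the singular part of $D T_\lambda u$ with respect to $\mathscr{L}^n$ would have to vanish. Hence the proof reduces to citing Lemma~\ref{lem:lip} and invoking this differentiation argument.
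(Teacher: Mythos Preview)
Your argument is correct, but it differs from the paper's one-line proof. The paper deduces the corollary from Lemma~\ref{lem:lip} together with Lemma~\ref{lem:franz}\ref{item:MAX3}: the bound $\mathcal{M}(DT_\lambda u)\leq c\,\lambda\,h(\lambda)^{-n-1}$ plugged into the pointwise inequality
\[
  |v(x)-v(y)| \leq c\,|x-y|\big(\mathcal{M}(Dv)(x)+\mathcal{M}(Dv)(y)\big)
\]
immediately yields a global Lipschitz bound on $T_\lambda u$, hence $T_\lambda u\in W^{1,\infty}(\Rn)$. You instead bypass Lemma~\ref{lem:franz}\ref{item:MAX3} and argue directly at the level of measures: the uniform bound $|DT_\lambda u|(Q)\leq K\,\ell(Q)^n$ forces $DT_\lambda u\ll\mathscr{L}^n$ with density bounded by $K$ via differentiation of measures. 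This is a perfectly valid alternative and arguably more elementary, as it avoids the Poincar\'e-type estimate underlying Lemma~\ref{lem:franz}\ref{item:MAX3}; the paper's route has the advantage of reusing a lemma already stated and directly producing the Lipschitz modulus. One small inaccuracy: $T_\lambda u - u = -\mathcal{B}_\lambda u$ is supported in $\mathcal{O}_\lambda$, which has finite measure but need not be bounded, so ``compact support'' is not quite right; this is harmless since $T_\lambda u\in L^{n/(n-1)}(\Rn)$ already gives the needed local integrability.
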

\begin{proof}
  This is a direct consequence of Lemma~\ref{lem:lip} and Lemma~\ref{lem:franz}\ref{item:MAX3}.
\end{proof}
We now turn to the convergence properties of $T_{\lambda}u \to u$ as
$\lambda\to\infty$.  The core feature of our truncation operator
$T_\lambda$ is that it possesses a nice (almost) dual operator
$S_\lambda$ which satisfies
$DT_\lambda \approx S_\lambda^* D$,
see~\eqref{eq:commutator-type}. Let us define for
$\rho\in \hold_c(\Omega;\R^{n})$
\begin{align}
  \label{eq:def-Slambda}
  S_\lambda\rho := \rho - \sum_{j}\eta_{j}(\rho-\varphi_{j}*\rho)=
  \rho \indicator_{\mathcal{O}_\lambda^\complement} + \sum_j \eta_j (\phi_j * \rho).
\end{align}
\begin{lemma}[Commutator type estimate]
  \label{lem:Slambda}
  The operator~$S_\lambda$ as given in \eqref{eq:def-Slambda} satisfies the following:
  \begin{enumerate}
  \item \label{itm:Slambda_Linfty} $S_\lambda$ is non-expansive for the $\lebe^{\infty}$-norm in the sense that for all
    $\rho\in \hold_c(\Omega;\R^{n})$ there holds
    \begin{align*}
      \norm{S_\lambda \rho}_\infty \leq \norm{\rho}_\infty,
    \end{align*}
  \item \label{itm:Slambda_commutator} For all $\rho\in\hold_{c}(\Omega;\R^{n})$ and $u\in\setBV(\R^{n})$ we have the \emph{commutator-type estimate}
    \begin{align}
      \label{eq:commutator-type}
      \bigabs{\skp{D T_\lambda u}{\rho} - \skp{Du}{S_\lambda \rho}}
      &\leq c\, h(\lambda) \abs{Du}(\mathcal{O}_\lambda)\, \norm{\rho}_\infty.
    \end{align}
  \end{enumerate}
\end{lemma}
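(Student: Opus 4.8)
The plan is to prove the two items essentially by direct computation, exploiting the near-duality $DT_\lambda \approx S_\lambda^* D$ encoded in the definition of $\mathcal{B}_j$.

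For item~\ref{itm:Slambda_Linfty}, I would use the second representation $S_\lambda \rho = \rho\,\indicator_{\mathcal{O}_\lambda^\complement} + \sum_j \eta_j (\phi_j * \rho)$. On $\mathcal{O}_\lambda^\complement$ this is just $\rho$, so $\abs{S_\lambda\rho} \leq \norm{\rho}_\infty$ there. On $\mathcal{O}_\lambda$, by~\ref{itm:P4} (more precisely, by the fact that $\sum_j \eta_j = 1$ on $\mathcal{O}_\lambda$, which follows from \ref{W1} and \ref{itm:P1}), the value $\sum_j \eta_j (\phi_j * \rho)(x)$ is a convex combination of the values $(\phi_j * \rho)(x)$; since each $\phi_j$ is a nonnegative mollifier with unit mass, $\abs{(\phi_j * \rho)(x)} \leq \norm{\rho}_\infty$, and hence $\abs{S_\lambda \rho (x)} \leq \norm{\rho}_\infty$. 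This gives the pointwise bound everywhere.

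For item~\ref{itm:Slambda_commutator}, the key algebraic identity is that, testing $DT_\lambda u = Du - \sum_j D(\mathcal{B}_j u)$ against $\rho$ and integrating by parts,
\begin{align*}
  \skp{DT_\lambda u}{\rho}
  &= \skp{Du}{\rho} - \sum_j \skp{\mathcal{B}_j u}{-\di\rho}
   = \skp{Du}{\rho} + \sum_j \int \bigl(\eta_j(u-u_j) - \phi_j * (\eta_j (u - u_j))\bigr)\di\rho \dx,
\end{align*}
while on the other side, since $\sum_j \eta_j = 1$ on $\mathcal{O}_\lambda = \{u \neq u_\lambda\}$ and $\mathrm{supp}(\eta_j) \subset \tfrac34 Q_j \subset \mathcal{O}_\lambda$, one rewrites
\begin{align*}
  \skp{Du}{S_\lambda \rho}
  &= \skp{Du}{\rho} - \sum_j \skp{Du}{\eta_j(\rho - \phi_j * \rho)}.
\end{align*}
Subtracting and moving derivatives onto $\eta_j(u-u_j)$ where convenient, the leading terms cancel and one is left with a sum of error terms measuring the failure of $D$ to commute with the mollification/localization, each supported in $Q_j$. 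The plan is to bound each such term by the product of three factors: $r_j$ (coming from a $\nabla\eta_j$ or from the mollification radius), $\epsilon_j/r_j = \tfrac14 h(\lambda)$ (the gain from the mollifier being of radius $\epsilon_j$ small compared to $r_j$), and the local variation $\abs{Du}(Q_j) + \int_{Q_j} \abs{u-u_j}/r_j\,dx \leq c\,\abs{Du}(Q_j)$ by Lemma~\ref{lem:1}\ref{item:aux1} and Poincar\'e, times $\norm{\rho}_\infty$. Summing over $j$ and invoking the finite-overlap property~\ref{W5} then yields $\sum_j \abs{Du}(Q_j) \leq c\, \abs{Du}(\mathcal{O}_\lambda)$, giving the claimed bound $c\, h(\lambda)\,\abs{Du}(\mathcal{O}_\lambda)\,\norm{\rho}_\infty$.

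The main obstacle is the bookkeeping in isolating exactly which combinations of $\eta_j$, $u_j$, $\phi_j$ survive after the cancellation and showing each carries a genuine factor of $h(\lambda)$ rather than $O(1)$. The typical mechanism is that $\phi_j * (\eta_j v) - \eta_j v$, when integrated against a bounded test function, gains a factor $\epsilon_j \norm{\nabla(\eta_j v)}_{L^1}$ after a Taylor expansion of $\rho$ (using $\int \phi_j = 1$ and the symmetry of $\phi$), and $\epsilon_j = \tfrac14 h(\lambda) r_j$; combined with the Poincar\'e estimate $\norm{\nabla(\eta_j(u-u_j))}_{L^1(Q_j)} \leq c\,\abs{Du}(Q_j)/1$ (absorbing the $r_j^{-1}$ from $\nabla\eta_j$ against the $r_j$ in $\epsilon_j$), this produces the decisive $h(\lambda)$. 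One must be careful that the term where $D$ falls on $\rho$ in $\skp{Du}{\eta_j \phi_j * \rho}$ versus $\skp{Du}{\phi_j*(\eta_j\cdot)}$-type rearrangements are matched correctly using Fubini and the self-adjointness of convolution with the even kernel $\phi_j$; this is where the "almost" in "almost dual" enters, and it is precisely this discrepancy that is controlled by the commutator bound.
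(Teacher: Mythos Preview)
Your proposal is correct and follows essentially the same route as the paper. Part~\ref{itm:Slambda_Linfty} is identical. For part~\ref{itm:Slambda_commutator}, the paper carries out the integration-by-parts computation you outline and finds that the difference $\skp{DT_\lambda u}{\rho} - \skp{Du}{S_\lambda\rho}$ reduces \emph{exactly} to
\[
  -\Bigskp{\sum_j\bigl(\nabla\eta_j(u-u_j) - \phi_j*(\nabla\eta_j(u-u_j))\bigr)}{\rho},
\]
i.e.\ the only surviving pieces are those where the derivative has landed on $\eta_j$. One small imprecision in your sketch: the factor $\epsilon_j$ is not extracted via a Taylor expansion of $\rho$ (which would cost $\norm{\nabla\rho}_\infty$, not $\norm{\rho}_\infty$), but via the standard mollifier estimate $\norm{v - \phi_j*v}_1 \leq c\,\epsilon_j\,\abs{Dv}(\Rn)$ applied to $v=\nabla\eta_j(u-u_j)$. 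After that, $\epsilon_j/r_j = \tfrac14 h(\lambda)$, Poincar\'e (Lemma~\ref{lem:1}\ref{item:aux1}), and the finite overlap~\ref{W5} finish the proof exactly as you describe.
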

\begin{proof} 
  The claim of~\ref{itm:Slambda_Linfty} follows by the pointwise estimate
  \begin{align*}
    |S\rho| & \leq
              |\rho|\indicator_{\mathcal{O}_{\lambda}^\complement}+\sum_{j}\eta_{j}|\varphi_{j}*\rho|
              \leq \norm{\rho}_\infty\indicator_{\mathcal{O}_{\lambda}^\complement} +
              \sum_j \eta_j \norm{\rho}_\infty \leq \norm{\rho}_\infty.
  \end{align*}
  Let us turn to the proof of~\ref{itm:Slambda_commutator}.  By a routine approximation argument, it suffices to consider
  $\rho \in \hold^1_c(\Omega;\R^{n})$ with $\norm{\rho}_\infty \leq 1$.
  Then
  \begin{align}
    \begin{aligned}
      \lefteqn{\skp{D T_\lambda u}{\rho} - \skp{D u}{S_\lambda \rho}}
      \quad&
      \\
      &= - \skp{T_\lambda u}{\divergence(\rho)} - \skp{D u}{S_\lambda \rho}
      \\
      & = - \Bigskp{ u - \sum_j \Big(\eta_j (u-u_j) - \phi_j * (\eta_j
        (u-u_j)) \Big)}{\divergence \rho}
      \\
      &\quad - \skp{Du}{\rho -
        \sum_{j}\eta_{j}(\rho-\varphi_{j}*\rho)}
      \\
      & = -\Bigskp{ \sum_j D \Big(\eta_j (u\!-\!u_j) - \phi_j * (\eta_j
        (u\!-\!u_j)) \Big)}{\rho} +
      \skp{Du}{\sum_{j}\eta_{j}(\rho\!-\!\varphi_{j}*\rho)}
      \\
      & = -\Bigskp{ \sum_j \Big(\eta_j Du - \phi_j * (\eta_j
        Du) \Big)}{\rho}  +
      \skp{Du}{\sum_{j}\eta_{j}(\rho-\varphi_{j}*\rho)}
      \\
      &\quad -\Bigskp{ \sum_j \Big(\nabla \eta_j (u-u_j) - \phi_j *
        (\nabla \eta_j
        (u-u_j)) \Big)}{\rho}
      \\
      &= -\Bigskp{ \sum_j \Big(\nabla \eta_j (u-u_j) - \phi_j *
        (\nabla \eta_j
        (u-u_j)) \Big)}{\rho}.
    \end{aligned}
  \end{align}
  In particular,
  \begin{align*}
    \bigabs{\skp{D T_\lambda u}{\rho} - \skp{D u}{S_\lambda \rho}}
    &\leq
      \sum_{j}\int_{\frac 34 Q_j}\left\vert
      ((u-u_{j})\nabla\eta_{j})-\phi_{j}*((u-u_{j})\nabla\eta_{j})\right\vert \dif x.
  \end{align*}
  Now, we use the well known mollifier estimate 
  \begin{align}\label{eq:BVmollifierest}
    \norm{v - \phi_j * v}_1 \leq c\, 
    \epsilon_j \abs{D v}(\R^n).
  \end{align}
  Indeed, the $W^{1,1}$-case can be found in~\cite{MalZ97}, while the
  $\setBV$ case follows by approximation in the strict topology. Hence,
  \begin{align*}
    \bigabs{\skp{D T_\lambda u}{\rho} - \skp{D u}{S_\lambda \rho}}
    & \stackrel{\eqref{eq:BVmollifierest}}{\lesssim}
      \sum_{j}\epsilon_{j} \bigabs{D (\nabla\eta_{j}(u-u_{j}))}(Q_j)
    \\
    & \;\lesssim
      \sum_{j}\frac{\epsilon_{j}}{r_j}\Big(\int_{Q_j^*}\frac{|u-u_{j}|}{r_{j}}\dif
      x + |Du|(Q_j)\Big)
    \\
    & \;\leq h(\lambda) \sum_{j}|Du|(Q_j)
    \\
    & \;\leq h(\lambda) |Du|(\mathcal{O}_\lambda).
  \end{align*}
  This is \ref{itm:Slambda_commutator}, and the proof is complete. 
\end{proof}
We are now able to characterize to prove area-strict convergence.
\begin{lemma}[Area-strict convergence]\label{lem:BVstrict}
  \label{lem:area-strict}
  We have $T_\lambda u \to u$ in the area-strict sense of
  $\setBV(\R^{n})$ as $\lambda \to \infty$. In particular, $D T_\lambda
  u \to Du$ area strictly for~$\lambda \to \infty$. Moreover,
  \begin{align}
    \label{eq:area-strict}
    \area{\D T_{\lambda}u}(\R^{n})
    &\leq \area{Du}(\Rn) + ch(\lambda)|Du|(\mathcal{O}_{\lambda}) +
      \frac{c}{\lambda}\abs{\D u}(\Rn),
      \\
    \label{eq:area-strict2}
    \abs{\D T_{\lambda}u}(\R^{n})
    &\leq \abs{Du}(\Rn) + ch(\lambda)|Du|(\mathcal{O}_{\lambda}).
  \end{align}
\end{lemma}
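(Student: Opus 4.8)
The plan is to derive all three assertions from what is already available — the stability bounds~\eqref{eq:stab2}, the commutator estimate of Lemma~\ref{lem:Slambda}, and the weak-type bound $\mathscr{L}^{n}(\mathcal{O}_{\lambda})\le\tfrac{c}{\lambda}\abs{Du}(\R^{n})$ from Lemma~\ref{lem:franz}\ref{item:MAX2} — together with the dual description of the area functional: for any $\mu\in\mathscr{M}(\R^{n};\R^{n})$ one has $\area{\mu}(\R^{n})=\sup\{\int_{\R^{n}}\phi_{0}\dif x+\skp{\mu}{\phi}\}$, the supremum running over $(\phi_{0},\phi)\in\hold_{c}(\R^{n};\R\times\R^{n})$ with $\phi_{0}^{2}+\abs{\phi}^{2}\le 1$, since $\area{\mu}$ is just the total variation of the vector measure $(\mathscr{L}^{n},\mu)$. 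Moreover the pointwise inequality $\phi_{0}+\phi\cdot z\le\sqrt{1+\abs{z}^{2}}$ together with $\abs{\phi}\le 1$ gives $\int_{E}\phi_{0}\dif x+\skp{\mu\mres E}{\phi}\le\area{\mu}(E)$ for every Borel set $E$, and in particular $\abs{\mu}\le\area{\mu}$ as measures.

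First I would record the $L^{1}$-convergence: since $u-T_{\lambda}u=\mathcal{B}_{\lambda}u$, estimate~\eqref{eq:stab2} with $q=1$ gives $\norm{u-T_{\lambda}u}_{1}\le c\int_{\mathcal{O}_{\lambda}}\abs{u}\dif x$, which tends to $0$ because $\mathscr{L}^{n}(\mathcal{O}_{\lambda})\to 0$ and $u\in L^{1}(\R^{n})$. For~\eqref{eq:area-strict2} I would test $DT_{\lambda}u$ against $\rho\in\hold_{c}(\R^{n};\R^{n})$ with $\norm{\rho}_{\infty}\le 1$: by the commutator estimate Lemma~\ref{lem:Slambda}\ref{itm:Slambda_commutator} the quantity $\skp{DT_{\lambda}u}{\rho}$ differs from $\skp{Du}{S_{\lambda}\rho}$ by at most $c\,h(\lambda)\abs{Du}(\mathcal{O}_{\lambda})$, and $\skp{Du}{S_{\lambda}\rho}\le\norm{S_{\lambda}\rho}_{\infty}\abs{Du}(\R^{n})\le\abs{Du}(\R^{n})$ by the $L^{\infty}$-non-expansiveness Lemma~\ref{lem:Slambda}\ref{itm:Slambda_Linfty}; the supremum over $\rho$ is~\eqref{eq:area-strict2}. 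For~\eqref{eq:area-strict} I would run the same argument for an admissible pair $(\phi_{0},\phi)$ in the dual formula: the commutator estimate makes $\int_{\R^{n}}\phi_{0}\dif x+\skp{DT_{\lambda}u}{\phi}$ differ from $\int_{\R^{n}}\phi_{0}\dif x+\skp{Du}{S_{\lambda}\phi}$ by at most $c\,h(\lambda)\abs{Du}(\mathcal{O}_{\lambda})$; since $S_{\lambda}\phi=\phi$ on $\mathcal{O}_{\lambda}^{\complement}$ (all $\eta_{j}$ are supported in $\mathcal{O}_{\lambda}$), the pointwise bound gives $\int_{\mathcal{O}_{\lambda}^{\complement}}\phi_{0}\dif x+\skp{Du\mres\mathcal{O}_{\lambda}^{\complement}}{\phi}\le\area{Du}(\mathcal{O}_{\lambda}^{\complement})$, while on $\mathcal{O}_{\lambda}$ one has $\int_{\mathcal{O}_{\lambda}}\phi_{0}\dif x+\skp{Du\mres\mathcal{O}_{\lambda}}{S_{\lambda}\phi}\le\mathscr{L}^{n}(\mathcal{O}_{\lambda})+\abs{Du}(\mathcal{O}_{\lambda})\le\area{Du}(\mathcal{O}_{\lambda})+\tfrac{c}{\lambda}\abs{Du}(\R^{n})$, using $\abs{\phi_{0}}\le 1$, $\norm{S_{\lambda}\phi}_{\infty}\le 1$, $\abs{Du}\le\area{Du}$ and the weak-type bound. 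Adding the two pieces, using additivity of $\area{Du}$, and taking the supremum over $(\phi_{0},\phi)$ yields~\eqref{eq:area-strict}.

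It then remains to deduce the area-strict convergence. By~\eqref{eq:area-strict2} the total variations $\abs{DT_{\lambda}u}(\R^{n})$ stay bounded by $(1+c)\abs{Du}(\R^{n})$; together with $T_{\lambda}u\to u$ in $L^{1}$ this forces $DT_{\lambda}u\wstar Du$ in $\mathscr{M}(\R^{n};\R^{n})$ by a standard density argument. Since $h(\lambda)\to 0$, $\abs{Du}(\mathcal{O}_{\lambda})\le\abs{Du}(\R^{n})$ and $\tfrac{1}{\lambda}\to 0$, estimate~\eqref{eq:area-strict} gives $\limsup_{\lambda\to\infty}\area{DT_{\lambda}u}(\R^{n})\le\area{Du}(\R^{n})$, while the lower semicontinuity~\eqref{eq:L1-lsc} applied to the convex function $f(z)=\sqrt{1+\abs{z}^{2}}$ gives the reverse inequality. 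Hence $\area{DT_{\lambda}u}(\R^{n})\to\area{Du}(\R^{n})$, so $DT_{\lambda}u\to Du$ area-strictly, which together with $T_{\lambda}u\to u$ in $L^{1}$ is precisely the claimed area-strict convergence of $T_{\lambda}u$ to $u$ in $\setBV(\R^{n})$.

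The one delicate point — and the whole reason for the particular shape of $T_{\lambda}$ — is obtaining the vanishing factor $h(\lambda)$ in front of $\abs{Du}(\mathcal{O}_{\lambda})$: a plain triangle inequality on $DT_{\lambda}u=Du-D\mathcal{B}_{\lambda}u$ only produces an absolute constant there, whereas the almost-duality $DT_{\lambda}\approx S_{\lambda}^{*}D$ of Lemma~\ref{lem:Slambda}, tested against the $L^{\infty}$-non-expansive $S_{\lambda}$ inside the dual formula for $\area{\cdot}$, is exactly what yields it; the residual $\tfrac{c}{\lambda}\abs{Du}(\R^{n})$ then enters only through the measure of the bad set $\mathcal{O}_{\lambda}$.
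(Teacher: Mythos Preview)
Your proof is correct and follows essentially the same strategy as the paper: the $L^{1}$-convergence from the stability bound~\eqref{eq:stab2} and $\abs{\mathcal{O}_{\lambda}}\to 0$, then the dual description of $\area{\cdot}$ combined with the commutator estimate and the $L^{\infty}$-non-expansiveness of $S_{\lambda}$ to obtain~\eqref{eq:area-strict} and~\eqref{eq:area-strict2}, and finally lower semicontinuity for the matching lower bound. The only cosmetic difference is bookkeeping: the paper applies $S_{\lambda}$ to \emph{both} components $(\rho_{1},\rho_{2})$ of the test pair and isolates the residual $\skp{\mathscr{L}^{n}}{\rho_{2}-S_{\lambda}\rho_{2}}$, whereas you leave $\phi_{0}$ untouched and instead split $\R^{n}$ into $\mathcal{O}_{\lambda}^{\complement}$ (where $S_{\lambda}\phi=\phi$) and $\mathcal{O}_{\lambda}$; both routes land on the same error terms.
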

\begin{proof}
  We start with the~$L^1$ convergence. Lemma~\ref{lem:Bj-stability},
  $\setBV(\R^{n})\hookrightarrow\lebe^{\frac{n}{n-1}}(\R^{n})$
  and $|\mathcal{O}_{\lambda}|\leq \frac{c}{\lambda}|Du|(\R^{n})$
  (which follows from Lemma \ref{lem:franz} (b)) imply
  \begin{align*}
    \norm{u - T_\lambda u}_1 &= \norm{\mathcal{B}_\lambda u}_1 \leq                                c\int_{\mathcal{O}_\lambda} \abs{u}\dif x \leq c\|u\|_{\frac{n}{n-1}}\Big(\frac{|Du|(\R^{n})}{\lambda} \Big)^{\frac{1}{n}}\to 0,\qquad \lambda\to\infty.
  \end{align*}
  Next, recall that the area-strict convergence of $DT_\lambda u$ to
  $Du$ is equivalent to strict convergence of
  $(DT_{\lambda}u,\mathscr{L}^{n})$ to $(Du,\mathscr{L}^{n})$.
  To prove the latter, let $\rho_{1}\in\hold_{c}^{1}(\R^{n},\Rn)$ and $\rho_2 \in \hold^1_c(\R^{n})$ be such that $\sqrt{|\rho_{1}|^{2}+|\rho_{2}|^{2}}\leq 1$. We estimate 
  \begin{align*}
    \lefteqn{ \bigabs{\skp{(D T_\lambda u,\mathscr{L}^n)}{(\rho_{1},\rho_2)}}  
    =
    \bigabs{\skp{D T_\lambda u}{\rho_{1}} + \skp{\mathscr{L}^n}{\rho_2}
    } } \quad 
    &
    \\
    &=
      \bigabs{
      \skp{D u}{S_\lambda \rho_{1}} + 
      \big(\skp{D T_\lambda u}{\rho_{1}} - \skp{Du}{S_\lambda \rho_{1}} \big)
      + \skp{\mathscr{L}^n}{\rho_2}
      }
    \\
    &=
      \bigabs{
      \skp{(D u,\mathscr{L}^n)}{(S_\lambda \rho_{1},S_{\lambda}\rho_2)} + 
      \big(\skp{D T_\lambda u}{\rho_{1}} - \skp{Du}{S_\lambda \rho_{1}} \big)
      + \skp{\mathscr{L}^n}{\rho_2-S_\lambda \rho_2}
      }
    \\
    &\leq
      \bigabs{
      \skp{(D u,\mathscr{L}^n)}{(S_\lambda \rho_{1},S_{\lambda}\rho_2})} + \bigabs{ 
      \big(\skp{D T_\lambda u}{\rho_{1}} - \skp{Du}{S_\lambda \rho_{1}} \big)}
      + \bigabs{\skp{\mathscr{L}^n}{\rho_2-S_\lambda \rho_2}
      } \\ 
    & =: \mathrm{I}+\mathrm{II}+\mathrm{III}. 
  \end{align*}
  By Lemma~\ref{lem:Slambda}\ref{itm:Slambda_Linfty}, $S_{\lambda}$ is non-expansive for the $\lebe^{\infty}$-norm and thus 
  \begin{align*}
    |(S_{\lambda}\rho_{1},S_{\lambda}\rho_{2})|\leq \sqrt{\|\rho_{1}\|_{\infty}^{2}+\|\rho_{2}\|_{\infty}^{2}}\leq 1.
  \end{align*}
  Hence, $\mathrm{I}\leq \area{Du}(\R^{n})$. For $\mathrm{II}$, we utilise Lemma~\ref{lem:Slambda}\ref{itm:Slambda_commutator} to find 
  \begin{align*}
    \mathrm{II}\leq ch(\lambda)|\D u|(\mathcal{O}_{\lambda})\|\rho_{1}\|_{\infty}\leq ch(\lambda)|Du|(\mathcal{O}_{\lambda})\stackrel{\lambda\to\infty}{\longrightarrow} 0
  \end{align*} 
  using $h(\lambda)\rightarrow0$ and $|\D u|(\mathcal{O}_{\lambda})\leq |\D u|(\R^n)<\infty$.
  Ad~$\mathrm{III}$. Using
  $\rho_2 = S_\lambda \rho_2\leq 1$ on $\mathcal{O}_\lambda^\complement$,
  $\norm{S_\lambda \rho_2}_\infty \leq \norm{\rho_2}_\infty$ and $|\mathcal{O}_{\lambda}|\leq\frac{c}{\lambda}|Du|(\R^{n})$,
  \begin{align*}
    \bigabs{\skp{\mathscr{L}^n}{\rho_2-S_\lambda \rho_2} }
    &\leq
      2\,\norm{\rho_2}_\infty
      \abs{\mathcal{O}_\lambda} \leq \frac{c}{\lambda}\abs{\D u}(\Rn)\stackrel{\lambda\to\infty}{\longrightarrow}0.  
  \end{align*}
  In consequence, gathering the estimates for $\mathrm{I},\mathrm{II},\mathrm{III}$, 
  \begin{align}
    \label{eq:1}
    \area{\D T_{\lambda}u}(\R^{n})
    &\leq \area{Du}(\Rn) + ch(\lambda)|Du|(\mathcal{O}_{\lambda}) + \frac{c}{\lambda}\abs{\D u}(\Rn).
  \end{align}
  This proves,~\eqref{eq:area-strict}. The
  estimate~\eqref{eq:area-strict2} follows analogously without the use of~$\rho_2$.
  Hence,
  \begin{align}\label{eq:gather1}
    \limsup_{\lambda\to\infty}|\area{\D T_{\lambda}u}|(\R^{n}) \leq  \area{\D u}(\R^{n}). 
  \end{align}
  On the other hand, by the first part of the proof, $T_{\lambda}u\to
  u$ in $\lebe_{\locc}^{1}(\R^{n})$. Thus, by the $L^1$-lower
  semicontinuity~\eqref{eq:L1-lsc} we obtain
  \begin{align*}
    \area{\D u}(\R^{n})\leq \liminf_{\lambda\to\infty}\area{\D T_{\lambda}u}(\R^{n}). 
  \end{align*}
  In conjunction with \eqref{eq:gather1}, this yields $\lim_{\lambda\to\infty}\area{DT_{\lambda}u}(\R^{n})=\area{Du}(\R^{n})$ and the proof is complete. 
\end{proof}
We conclude by identifying the limits of the single constituents of $T_{\lambda}u$:
\begin{lemma}
  \label{lem:decompositionlimit}
  The following hold: 
  \begin{enumerate}
  \item \label{itm:decompositionlimit1}
    $\indicator_{\mathcal{O}_\lambda^\complement} \nabla T_\lambda u =
    \indicator_{\mathcal{O}_\lambda^\complement} \nabla u \to \nabla u$ in $\lebe^1(\R^n)$ as $\lambda\to\infty$.
  \item \label{itm:decompositionlimit2}  $\nabla T_\lambda\mathscr{L}^{n}\mres\mathcal{O}_{\lambda} \to D^s u$ in the sense of area-strict convergence of $\R^{n}$-valued Radon measures.
  \end{enumerate}
\end{lemma}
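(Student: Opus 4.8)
\emph{Proof plan.}

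The first assertion is a statement about supports. Since $\support(\mathcal{B}_j u)\subset Q_j$ and, by~\ref{W2}, $Q_j\subset 8Q_j\subset\mathcal{O}_\lambda$, the correction $\mathcal{B}_\lambda u=\sum_j\mathcal{B}_j u$ is supported in $\mathcal{O}_\lambda$, and the sum converges in $\setBV(\R^n)$ by Lemma~\ref{lem:Bj-stability}, so $D(T_\lambda u-u)=-D\mathcal{B}_\lambda u$ is carried by $\mathcal{O}_\lambda$. Consequently $T_\lambda u=u$ on $\mathcal{O}_\lambda^\complement$ and $\indicator_{\mathcal{O}_\lambda^\complement}\nabla T_\lambda u=\indicator_{\mathcal{O}_\lambda^\complement}\nabla u$ almost everywhere. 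By Lemma~\ref{lem:franz}\ref{item:MAX2} we have $|\mathcal{O}_\lambda|\le\frac{c}{\lambda}|Du|(\R^n)\to 0$ as $\lambda\to\infty$, so absolute continuity of the integral gives $\int_{\mathcal{O}_\lambda}|\nabla u|\dif x\to 0$; this is precisely~\ref{itm:decompositionlimit1}.

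For~\ref{itm:decompositionlimit2} set $\mu_\lambda:=\nabla T_\lambda u\,\mathscr{L}^n\mres\mathcal{O}_\lambda$. As $T_\lambda u\in W^{1,\infty}(\R^n)$ by Corollary~\ref{cor:lip}, the measure $D T_\lambda u=\nabla T_\lambda u\,\mathscr{L}^n$ is absolutely continuous, and combined with~\ref{itm:decompositionlimit1} we obtain the decomposition
\[
  D T_\lambda u=\indicator_{\mathcal{O}_\lambda^\complement}\nabla u\,\mathscr{L}^n+\mu_\lambda
\]
of $D T_\lambda u$ into two measures carried by the complementary sets $\mathcal{O}_\lambda^\complement$ and $\mathcal{O}_\lambda$. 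The weak${}^*$ limit of $\mu_\lambda$ then follows by subtraction: $D T_\lambda u\stackrel{*}{\rightharpoonup} D u$ by Lemma~\ref{lem:area-strict}, while $\indicator_{\mathcal{O}_\lambda^\complement}\nabla u\,\mathscr{L}^n\to\nabla u\,\mathscr{L}^n=D^a u$ even in the total variation norm by~\ref{itm:decompositionlimit1}; hence $\mu_\lambda\stackrel{*}{\rightharpoonup} D u-D^a u=D^s u$.

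It remains to upgrade weak${}^*$ convergence to area-strict convergence, i.e.\ to control $\area{\mu_\lambda}$. Because the two pieces of $D T_\lambda u$ sit on complementary sets, comparing the densities of all measures involved on $\mathcal{O}_\lambda$ and on $\mathcal{O}_\lambda^\complement$ yields the exact identity
\[
  \area{D T_\lambda u}+\mathscr{L}^n=\area{\mu_\lambda}+\area{\indicator_{\mathcal{O}_\lambda^\complement}\nabla u\,\mathscr{L}^n},
\]
hence $\area{\mu_\lambda}=\area{D T_\lambda u}+\mathscr{L}^n-\area{\indicator_{\mathcal{O}_\lambda^\complement}\nabla u\,\mathscr{L}^n}$. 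Evaluating on a ball $B_R$ with $|D^s u|(\partial B_R)=0$ (which holds for a.e.\ $R$) and letting $\lambda\to\infty$: by Reshetnyak's continuity theorem the area-strict convergence $D T_\lambda u\to D u$ of Lemma~\ref{lem:area-strict} gives $\area{D T_\lambda u}\stackrel{*}{\rightharpoonup}\area{D u}$, so the first term tends to $\area{D u}(B_R)$; and $\area{\indicator_{\mathcal{O}_\lambda^\complement}\nabla u\,\mathscr{L}^n}(B_R)=\int_{B_R\setminus\mathcal{O}_\lambda}\sqrt{|\nabla u|^2+1}\dif x+|B_R\cap\mathcal{O}_\lambda|\to\int_{B_R}\sqrt{|\nabla u|^2+1}\dif x=\area{D^a u}(B_R)$ by dominated convergence together with $|\mathcal{O}_\lambda|\to 0$. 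Since $\area{D u}(B_R)-\area{D^a u}(B_R)+|B_R|=|D^s u|(B_R)+|B_R|=\area{D^s u}(B_R)$, we conclude $\area{\mu_\lambda}(B_R)\to\area{D^s u}(B_R)$ for a.e.\ $R$; together with $\mu_\lambda\stackrel{*}{\rightharpoonup} D^s u$ this is the claimed area-strict convergence. (For bounded $\Omega$ one simply takes $B_R\supset\Omega$; for $\Omega=\R^n$ the Lebesgue contribution to $\area{\cdot}(\R^n)$ is infinite, so one argues throughout with the finite reduced functional $\area{\cdot}-\mathscr{L}^n$.)

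The only real content is the last paragraph, and the delicate point is the bookkeeping of the Lebesgue part of the area functional: it is precisely the disjointness of the supports of $\indicator_{\mathcal{O}_\lambda^\complement}\nabla u\,\mathscr{L}^n$ and $\mu_\lambda$ that makes the above splitting of $\area{\cdot}$ exact and lets one pass to the limit piece by piece.
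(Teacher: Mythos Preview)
Your proof is correct and follows essentially the same strategy as the paper's: establish the equality on $\mathcal{O}_\lambda^\complement$ from the support of the $\mathcal{B}_j u$, obtain the weak* limit of $\mu_\lambda$ by subtracting the $L^1$-convergent piece from the already-known weak* limit $DT_\lambda u \to Du$, and then upgrade to area-strict by splitting the area functional along the good/bad set decomposition and invoking Lemma~\ref{lem:area-strict}. The paper organizes the last step as $\area{DT_\lambda u}(\mathcal{O}_\lambda)=\area{DT_\lambda u}(\R^n)-\area{Du}(\mathcal{O}_\lambda^\complement)$ and passes to the limit directly on~$\R^n$; your measure identity $\area{DT_\lambda u}+\mathscr{L}^n=\area{\mu_\lambda}+\area{\indicator_{\mathcal{O}_\lambda^\complement}\nabla u\,\mathscr{L}^n}$ followed by evaluation on balls $B_R$ is the same bookkeeping rearranged, with the advantage that you explicitly address the otherwise formal subtraction of infinite Lebesgue parts in the case $\Omega=\R^n$ (a point the paper leaves implicit). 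One small remark: the weak* convergence $\area{DT_\lambda u}\stackrel{*}{\rightharpoonup}\area{Du}$ you use is really the elementary consequence of lower semicontinuity on open sets plus convergence of total mass, not Reshetnyak's theorem proper.
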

\begin{proof}
  Since $\abs{\mathcal{O}_\lambda} \to 0$ as $\lambda\to\infty$ and
  the approximate gradient satisfies $\nabla u \in \lebe^1(\R^{n})$,
  we have
  $\nabla u - \indicator_{\mathcal{O}_\lambda^\complement} \nabla u =
  \indicator_{\mathcal{O}_\lambda} \nabla u \to 0$ in
  $\lebe^1(\Rn)$. This
  proves~\ref{itm:decompositionlimit1}. Ad~\ref{itm:decompositionlimit2}. Let
  $\varphi\in\hold_{c}(\R^{n})$. By Lemma~\ref{lem:BVstrict} it
  follows that $DT_{\lambda}u \to Du$ in the weak* sense, so
  \begin{align*}
    \langle \indicator_{\mathcal{O}_{\lambda}^\complement}\nabla T_{\lambda}u,\varphi\rangle = \langle \nabla T_{\lambda}u,\varphi\rangle - \langle \indicator_{\mathcal{O}_{\lambda}}\nabla T_{\lambda}u,\varphi\rangle \to \langle Du-\nabla u\mathscr{L}^{n},\varphi\rangle = \langle D^{s}u,\varphi\rangle. 
  \end{align*}
  It thus remains to establish that $\area{DT_{\lambda}u}(\mathcal{O}_{\lambda})\to \area{D^{s}u}(\R^{n})$ as $\lambda\to\infty$. To this end, we record that 
  \begin{align*}
    \area{DT_{\lambda}u}(\mathcal{O}_{\lambda})
    & =
      \area{DT_{\lambda}u}(\R^{n})-\area{DT_{\lambda}u}(\mathcal{O}_{\lambda}^\complement)
    \\
    & = \area{DT_{\lambda}u}(\R^{n})-\area{Du}(\mathcal{O}_{\lambda}^\complement)
    \\ 
    & = \area{DT_{\lambda}u}(\R^{n})-(\area{\nabla u\mathscr{L}^{n}}(\R^{n}) -
      \area{\nabla u\mathscr{L}^{n}}(\mathcal{O}_{\lambda}))
    \\ 
    & \leq (\area{DT_{\lambda}u}(\R^{n})-\area{\nabla
      u\mathscr{L}^{n}}(\R^{n})) +  |\mathcal{O}_{\lambda}| + |\nabla
      u\mathscr{L}^{n}|(\mathcal{O}_{\lambda})
    \\ 
    & \to \area{Du}(\R^{n})-\area{\nabla u}(\R^{n}) = \area{D^{s}u}(\R^{n}),\qquad \lambda\to\infty, 
  \end{align*}
  where we have used that
  $\nabla u
  \mathscr{L}^{n}\mres\mathcal{O}_{\lambda}^\complement=Du\mres\mathcal{O}_{\lambda}^\complement$
  in the third equality, the trivial bound
  $\sqrt{1+|\cdot|^{2}}\leq 1+|\cdot|$ in the fourth and
  $|\mathcal{O}_{\lambda}|\to 0$ in conjunction with
  \ref{itm:decompositionlimit1} in the ultimate line. This establishes
  $\limsup_{\lambda\to\infty}\area{DT_{\lambda}u}(\mathcal{O}_{\lambda})\leq
  \area{D^{s}u}(\R^{n})$. On the other hand, the $L^1$ lower
  semicontinuity~\eqref{eq:L1-lsc} implies
  $\area{Du}(\R^{n})\leq
  \liminf_{\lambda\to\infty}\area{DT_{\lambda}u}(\R^{n})$ so that, in
  total,
  $\area{Du}(\R^{n})=\lim_{\lambda\to\infty}\area{DT_{\lambda}u}(\R^{n})$. The
  proof is complete.
\end{proof}

\subsection{Preserving zero boundary values}
\label{sec:pres-zero-bound}

Sometimes it is desirable to preserve zero boundary values of a given function. We show in this section how to modify our Lipschitz truncation such that the~$u_\lambda$ also have zero boundary values.

Hence, let~$\Omega$ be a bounded Lipschitz domain and let~$u \in
\setBV(\Rn)$ with $u=0$ on $\Rn \setminus \Omega$. We take the same
decomposition of our bad set by a Whitney
cover as in the beginning of the section. Recall that
\begin{align*}
  \mathcal{B}_j u&= \eta_j (u-u_j) - \phi_j * (\eta_j
                   (u-u_j)),
  \\                   
  T_\lambda u &= u_\lambda = u - \sum_j \mathcal{B}_j u.
\end{align*}
To obtain~$T_\lambda u=0$ on~$\Rn \setminus \Omega$, we have
to ensure that~$\mathcal{B}_j u=0$ on~$\Rn \setminus \Omega$. For
this, let~$Q_j$ be a cube close to the boundary~$\partial \Omega$,
i.e.  $\frac 34 Q_j \not\subset \Omega$). In this case the definition
of the~$u_j$ in~\eqref{eq:defvj} ensures that~$u_j=0$. Thus, in this
case
\begin{align*}
  \mathcal{B}_j u &= \eta_j u - \phi_j *(\eta_j u).
\end{align*}
By assumption on~$u$, we have $\eta_j(u-u_j)= \eta_j u = 0 $
on~$\Rn \setminus \Omega$. However, the convolution with~$\phi_j$
might transport values of~$u$ to~$\Rn \setminus \Omega$. To avoid
this, it is necessary to use a directed convolution. So have to drop
the assumption that the~$\phi_j$ are radially symmetric mollifiers.

By Lemma~\ref{lem:franz}~\ref{item:MAX1}, we have
\begin{align*}
  \abs{Q_j} \leq \mathscr{L}^n(\set{\mathcal{M}(Du) > \lambda})
  \lesssim \frac{\abs{Du}(\Rn)}{\lambda}. 
\end{align*}
Thus, for large~$\lambda$ the Whitney cubes are small. Now, since~$\Omega$
is a Lipschitz domain, its boundary~$\partial \Omega$ can be written
locally on~$Q_j$ as a graph of a Lipschitz function. Thus, there
exists a unit vector~$\nu_j$ (an approximation of the normal
of~$\partial \Omega$ on~$Q_j$) such that $Q_j \cap \Omega$ satisfies
the outer cone condition in direction~$\nu_j$. Thus, we can
choose~$K=K(\Omega)\geq 1$ such that for all~$x \in \Omega$ we have
\begin{align}
  \label{eq:directed}
  \Omega^\complement + \ball_{\frac 1K}\Big(\tfrac 12 \nu_j\Big) = \Bigset{x+y\,:\,
  x\in \Omega^\complement, y \in \ball_{\frac 1K}\Big(\tfrac 12 \nu_j\Big)}  \subset
  \Omega^\complement. 
\end{align}
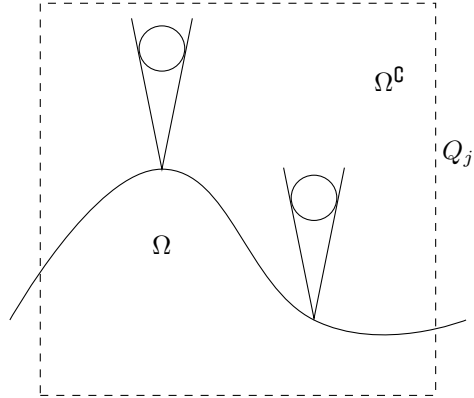
\begin{figure}[ht!]
  \centering
  \begin{tikzpicture}[scale=2]
    \draw plot [smooth, tension=0.8] coordinates { (-1,-1) (0,0)
      (1,-1) (2,-1)};
    \begin{scope}[shift={(0,0)}]
      \draw (-0.2,1) -- (0,0) -- (0.2,1); \draw (0,0.8) circle
      (1.5mm);
    \end{scope}
    \begin{scope}[shift={(1,-0.99)}]
      \draw (-0.2,1) -- (0,0) -- (0.2,1); \draw (0,0.8) circle
      (1.5mm);
    \end{scope}
    \node at (1.5,0.6) {$\Omega^\complement$}; \node at (1.95,0.1)
    {$Q_j$}; \node at (0.0,-0.5) {$\Omega$}; \draw[dashed] (-0.8,-1.5)
    rectangle (1.8,1.1);
  \end{tikzpicture}
  
  \caption{Local chart of the boundary. The cones indicate the
    direction of the convolution.}
  \label{fig:cone}
\end{figure}
Now, let $\phi$ be a smooth, non-negative, radially symmetric
mollifier with support in the unit ball. Then we define the local
directed mollifier~$\phi_j$ by
\begin{align*}
  \phi_j(x) &:= (K\, \epsilon_j)^{-n}
              \phi\bigg(\frac{x}{K\, \epsilon_j}+\frac{\nu_j}{2}\bigg)
              \qquad \text{with}
              \qquad \epsilon_j := h(\lambda)\, \tfrac 14
              \,r_j. 
\end{align*}
Then~\eqref{eq:directed} ensures that $\phi_j * (\eta_j u) =0$
on~$\Rn \setminus \Omega$. The same holds for the~$\mathcal{B}_j
u$. Consequently, $u_\lambda = 0$ on $\Rn \setminus \Omega$ provided
that~$\lambda$ is large enough depending on the
boundary~$\partial \Omega$. Note that since the~$\phi_j$ are no longer
radially symmetric, one has to replace~$\phi_j$ in the definition of
the (almost) dual operator~$S_\lambda$ by $\overline{\phi_j}$ with
$\overline{\phi_j}(x) := \phi_j(-x)$.

\subsection{Proof of Theorem~\ref{thm:main}}
\label{sec:proof-theor-refthm:m}

We are now in position to prove our main theorem.

For a given $\lambda>0$, define $u_{\lambda}:=T_{\lambda}u$ as
in~\eqref{eq:lipdef}.  The Lipschitz property~\ref{itm:thm-lip}
follows from Corollary~\ref{cor:lip}. The smallness of the
set~$\set{u\neq u_\lambda}$ from \ref{itm:thm-small} is an immediate
consequence of the construction of $T_{\lambda}u$ and
Lemma~\ref{lem:franz}~\ref{item:MAX2}. The stability asssertions
of~\ref{itm:thm-stab} are given in Lemma \ref{lem:Bj-stability}. On
the other hand, the convergence properites~\ref{itm:thm-conv} follow
from Lemma~\ref{lem:BVstrict} and Lemma~\ref{lem:decompositionlimit}.
The preservation of the zero boundary values~\ref{itm:thm-zero} follows
from Subsection~\ref{sec:pres-zero-bound}.  The proof of
Theorem~\ref{thm:main} is complete. \qed

\def\polhk#1{\setbox0=\hbox{#1}{\ooalign{\hidewidth
  \lower1.5ex\hbox{`}\hidewidth\crcr\unhbox0}}}
  \def\ocirc#1{\ifmmode\setbox0=\hbox{$#1$}\dimen0=\ht0 \advance\dimen0
  by1pt\rlap{\hbox to\wd0{\hss\raise\dimen0
  \hbox{\hskip.2em$\scriptscriptstyle\circ$}\hss}}#1\else {\accent"17 #1}\fi}
  \def\ocirc#1{\ifmmode\setbox0=\hbox{$#1$}\dimen0=\ht0 \advance\dimen0
  by1pt\rlap{\hbox to\wd0{\hss\raise\dimen0
  \hbox{\hskip.2em$\scriptscriptstyle\circ$}\hss}}#1\else {\accent"17 #1}\fi}
  \def\ocirc#1{\ifmmode\setbox0=\hbox{$#1$}\dimen0=\ht0 \advance\dimen0
  by1pt\rlap{\hbox to\wd0{\hss\raise\dimen0
  \hbox{\hskip.2em$\scriptscriptstyle\circ$}\hss}}#1\else {\accent"17 #1}\fi}
  \def\ocirc#1{\ifmmode\setbox0=\hbox{$#1$}\dimen0=\ht0 \advance\dimen0
  by1pt\rlap{\hbox to\wd0{\hss\raise\dimen0
  \hbox{\hskip.2em$\scriptscriptstyle\circ$}\hss}}#1\else {\accent"17 #1}\fi}
  \def\cprime{$'$}
\providecommand{\bysame}{\leavevmode\hbox to3em{\hrulefill}\thinspace}
\providecommand{\MR}{\relax\ifhmode\unskip\space\fi MR }
% \MRhref is called by the amsart/book/proc definition of \MR.
\providecommand{\MRhref}[2]{%
  \href{http://www.ams.org/mathscinet-getitem?mr=#1}{#2}
}
\providecommand{\href}[2]{#2}

% \bibliographystyle{amsalpha}
% \bibliography{lars}

\end{document}